\DeclareMathOperator{\im}{im}
\DeclareMathOperator{\Obj}{Obj}
\DeclareMathOperator{\id}{id}
\DeclareMathOperator{\length}{length}
\newcommand{\y}{\mathbf{y}}
\newcommand{\bZ}{\mathbb{Z}}
\newtheorem{theorem}{Theorem}[section]
\newtheorem*{maintheorem}{Main Theorem}
\newtheorem*{mainapplication}{Main Application}
\newtheorem{proposition}[theorem]{Proposition}
\newtheorem{definition}[theorem]{Definition}
\newtheorem{lemma}[theorem]{Lemma}
\theoremstyle{remark}
\newtheorem*{remark}{Remark}
\newtheorem*{claim}{Claim}
\newtheorem{example}[theorem]{Example}
\begin{document}

\title{A Cohomological Bundle Theory for Sheaf Cohomology}
\author{Mihail Hurmuzov}
\address{Department of Mathematics, University of York, York YO10 5DD, England}
\email{mihail.hurmuzov@york.ac.uk}
\thanks{The author would like to thank Brent Everitt for suggesting the direction for this work and for numerous fruitful conversations thereafter; and Paul Turner for pointing towards the main application.}
\subjclass[2020]{Primary 55N30; Secondary 55T05, 05E45}
\keywords{Sheaf cohomology, spectral sequence, bundle of sheaves}
\begin{abstract}
    We develop a bundle theory of presheaves on small categories, based on similar work by Brent Everitt and Paul Turner. For a certain set of presheaves on posets, we produce a Leray-Serre type spectral sequence that gives a reduction property for the cohomology of the presheaf. This extends the usual cohomological reduction of posets with a unique maximum.
\end{abstract}
\maketitle

\section*{Introduction}

In \cite{ET12} Everitt and Turner develop a bundle theory for coloured posets. In their development, coloured posets act as a generalisation of Khovanov's `cube' construction in his celebrated paper on the categorification of the Jones polynomial \cite{khovanov}. The notion of homology for coloured posets differs from the usual definition of sheaf homology, so it is desirable to rebuild the theory in a more versatile form. In this paper, we do away with coloured posets and move to the full generality of presheaves of modules on small categories. Additionally, we rework the arguments from the more natural \emph{co}homological point of view. For the main theoretical result we impose the general assumption that the base $\mathbf{B}$ of our bundle $\xi:\mathbf{B}\to\mathbf{Sh}$ is a poset category and that for each $x\in\mathbf{B}$, the small category of $\xi(x)$ is also a poset category; we call such a bundle a \emph{poset bundle of sheaves}:

\begin{maintheorem}
	Let $\xi:\mathbf{B}\to \mathbf{Sh}$ be a poset bundle of sheaves with $\mathbf{B}$ a recursively admissible finite poset, and $(\mathbf{E}_\xi,F_\xi)$ the associated total sheaf. Then there is a spectral sequence that converges to the cohomology of the total sheaf:
	\[
	E_2^{p,q}=H^p(\mathbf{B};\mathcal{H}^q_{fib}(\xi))\Rightarrow H^\bullet(\mathbf{E}_\xi;F_\xi).
	\]
\end{maintheorem}

Bundles of sheaves and the total sheaf are defined in \S \ref{sec:cat}, while recursively admissible posets are defined in \S \ref{sec:technical}.

In the context of sheaf cohomology, the spectral sequence for a poset bundle of sheaves converges to the cohomology of the fiber at the maximum of the base. Thus, while the main theorem of \cite{ET12} is able to model Khovanov homology, our key application is as follows.

\begin{mainapplication}
	Let $\mathbf{E}$ and $\mathbf{B}$ be posets, with $\mathbf{B}$ recursively admissible. Suppose that $\pi:\mathbf{E}\to\mathbf{B}$ is an onto poset map such that for all $x<y$ in $\mathbf{B}$, the subposet $\pi^{-1}(x)\cup\pi^{-1}(y)$ of $\mathbf{E}$ is admissible for $\pi^{-1}(x),\pi^{-1}(y)$. Then
	\[
	H^\bullet(\mathbf{E};F)\cong H^\bullet(\pi^{-1}(1);F),
	\]
	for all $F\in\mathbf{Sh}(\mathbf{E})$, where $1$ is the unique maximum of $\mathbf{B}$.
\end{mainapplication}

We proceed as follows. In \S \ref{sec:cat} we define a category $\mathbf{Sh}$ of (pre)sheaves on small categories that features morphisms between objects reminiscent of the induced maps in \cite[p.4]{ET15}. A bundle is then just a small category decorated with objects and morphisms of $\mathbf{Sh}$. We also give a way to `glue up' the elements of $\mathbf{Sh}$ in a bundle into a \emph{total sheaf}. The main aim of the paper is to understand the relationship between this total sheaf and the bundle. In \S \ref{sec:cochain} we describe explicitly the cochain modules giving rise to the sheaf cohomology of a sheaf. This gives the concrete tools needed to establish the quasi-isomorphisms in the main theorem. Next, the general construction of a spectral sequence from a bicomplex provides the first step of the overarching argument -- a spectral sequence, constructed from the bundle, that converges to a particular cohomology. The rest of the argument is establishing that this particular cohomology coincides (in some restricted cases) with the usual sheaf cohomology of the total sheaf.

The chain map $\omega$ that will witness this coincidence is defined in \S \ref{sec:traversals}. Similarly to \cite{ET12}, it involves signed combinations of traversals of a grid, determined by a pair of sequences in the base small category and in one of the small categories over an object of the base. The cohomological viewpoint here necessitates our $\omega$ goes `the opposite way' to that in \cite{ET12}. The next section collects some technical tools and the definition of a recursively admissible poset -- the restricted case in which the main theorem holds. The bulk of the work is in \S \ref{sec:long1} and \S \ref{sec:long2} with the establishment of two explicit quasi-isomorphisms, giving rise to two long exact sequence in the cohomologies of the total complex and of the sheaf. This is done by careful manipulation of spectral sequences and morphisms between them. Section \S\ref{sec:main} collects the results into a proof of the main theorem.

The final section \S\ref{sec:application} gives a recipe for turning a sheaf on a poset into the total sheaf of a bundle. If the base of that bundle is recursively admissible, then applying our main theorem completes the proof of the main application. We finish with an example of a repeated use of this application.

\section{The category $\mathbf{Sh}$}\label{sec:cat}
For the rest of the paper, $R$ is a commutative ring with $1$. 

We define a category $\mathbf{Sh}$ of (pre-)sheaves on small categories. An object $(\mathbf{C},F)$ of $\mathbf{Sh}$ consists of a small category $\mathbf{C}$ and a sheaf $F$ on $\mathbf{C}$. A $\mathbf{Sh}$-morphism $\gamma:(\mathbf{C},F)\to(\mathbf{D},G)$ is a pair of maps $(\gamma_1,\gamma_2)$, where $\gamma_1:\mathbf{D}\to \mathbf{C}$ is a covariant functor and $\gamma_2:F\gamma_1\to G$ is a natural transformation:
\vspace{-.5em}
\[
\begin{tikzpicture}
	[scale=0.9,auto=center]
	
	\clip (-1.4,-1.4) rectangle (11, 2.7); 
	\node (1) at (0,2) {$\mathbf{C}$};
	\node (2) at (0,0)  {$\mathbf{D}$};
	\node (3) at (3,1)  {$\prescript{}{R}{Mod}$};
	\node (4) at (4.5,1) {or};
	\node (5) at (6,2) {$F(\gamma_1(x))$};
	\node (6) at (10,2) {$F(\gamma_1(y))$};
	\node (7) at (6,0) {$G(x)$};
	\node (8) at (10,0) {$G(y)$};
	\node (9) at (1.5,0.5) {};
	\node (10) at (8,1) {$\circlearrowright$};
	\draw[->]
	(1) edge node[above] {\(F\)} (3)
	(2) edge node[left] {\(\gamma_1\)} (1)
	(2) edge node[below] {\(G\)} (3)
	(5) edge node[auto] {\(\gamma_2{}_x\)} (7)
	(7) edge node[auto] {\(\scriptstyle G(y\to x)\)} (8)
	(5) edge node[auto] {\(\scriptstyle F(\gamma_1(y)\to \gamma_1(x))\)} (6)
	(6) edge node[auto] {\(\gamma_2{}_y\)} (8)
	(1) edge[double] node[below left] {\(\gamma_2\)} (9);
\end{tikzpicture}\]
\vspace{-15mm}

The composition of two morphisms $\gamma:(\mathbf{C},F)\to(\mathbf{D},G)$ and $\delta:(\mathbf{D},G)\to(\mathbf{E},H)$ is then $(\gamma_1\delta_1,\delta_2\gamma_2):(\mathbf{C},F)\to(\mathbf{E},H)$.

\begin{definition}
	\normalfont Let $\mathbf{B}$ be a small category. A \emph{bundle of sheaves} with base $\mathbf{B}$ is a contravariant functor $\xi:\mathbf{B}\to \mathbf{Sh}$.
\end{definition}

\begin{example}\label{constant}
	\begin{enumerate}
		\item A \emph{constant bundle} $\xi=\mathbf{B}\times (\mathbf{C},F)$ is a bundle of sheaves with $\xi(x)=(\mathbf{C},F)$ for all $x\in\mathbf{B}$ and $\xi(x\to y)=\id_{(\mathbf{C},F)}$ for all arrows $x\to y$.
		\item A bundle of coloured posets with base $\mathbf{B}$ in the language of \cite{ET12} is a covariant functor $\zeta$ from a poset $\mathbf{B}$ with a unique maximum to the category $\mathbf{CP}_R$ of coloured posets. Such a bundle of coloured posets gives rise to a bundle of sheaves $\xi:\mathbf{B}^{op}\to\mathbf{Sh}$, where if $\zeta(x)=(\mathbf{P},F)$, then $\xi(x)=(\mathbf{P}^{op},F)$.
		\item If $\mathbf{P}$ and $\mathbf{Q}$ are posets, then an object $F\in\mathbf{Sh}(\mathbf{P}\times\mathbf{Q})$ determines a bundle of sheaves $\xi:\mathbf{P}\to\mathbf{Sh}$. For any $x\in\mathbf{P}$, denote by $F_x$ the functor from the full subcategory $\{x\}\times\mathbf{Q}$ of $\mathbf{P}\times\mathbf{Q}$ that agrees with $F$. Then $\xi(x)=(\mathbf{Q},F_x)$ for all $x\in\mathbf{P}$ and $\xi(x\to y)=(\id_{\mathbf{Q}},F_{x\to y})$, where $F_{x\to y}|_z:F_y(y,z)\to F_x(x,z)$ agrees with $F$.
		\item We can also model a group action on a sheaf $(\mathbf{C},F)$. Let the category $\mathbf{C}_G$ have one object $\bullet$ and let the morphisms of $\mathbf{C}_G$ be given by $G$, with composition given by the group operation. Then a bundle of sheaves $\xi:\mathbf{C}_G\to\mathbf{Sh}$ with $\xi(\bullet)=(\mathbf{C},F)$ describes the action of $G$ on $(\mathbf{C},F)$.
	\end{enumerate}
\end{example}

For clarity, if $\xi$ is a bundle of sheaves with base $\mathbf{B}$ and $x\in \mathbf{B}$, then we will use the notation $\mathbf{E}_x$ for the small category that is the first coordinate of $\xi(x)$ and $F_x$ for the second coordinate of $\xi(x)$. Also, if $y\in \mathbf{E}_x$, then $\pi(y)=x$, i.e.~$\pi$ indicates which fiber $y$ comes from. Finally, for the first coordinate of the $\mathbf{Sh}$-morphism  $\xi(x_1\to x_2):(\mathbf{E}_{x_2},F_{x_2})\to(\mathbf{E}_{x_1},F_{x_1})$ we write $\xi_1(x_1\to x_2):\mathbf{E}_{x_1}\to\mathbf{E}_{x_2}$ instead of $\xi(x_1\to x_2)_1$; similarly $\xi_2(x_1\to x_2):F_{x_2}\xi_1(x_1\to x_2)\to F_{x_1}$ instead of $\xi(x_1\to x_2)_2$ for the second.

\begin{definition}\label{TotalSheaf}
	\normalfont Let $\mathbf{B}$ be a small category and $\xi$ a bundle of sheaves with base $\mathbf{B}$. The associated \emph{total sheaf} $(\mathbf{E}_\xi,F_\xi)$ consists of a small category $\mathbf{E}_\xi$ and a sheaf \hbox{$F_\xi:\mathbf{E}_\xi\to \prescript{}{R}{Mod}$}, defined as follows (also see Figure \ref{gluedup}):
	\begin{itemize}[leftmargin=*]
		\item As a small category, $\Obj(\mathbf{E}_\xi)=\bigsqcup_{x\in \mathbf{B}}\Obj(\mathbf{E}_x)$. The simple arrows of $\mathbf{E}_\xi$ are of two types. There is an arrow $y_1\to y_2$ in $\mathbf{E}_\xi$ if
		\begin{enumerate}
			\item $y_1,y_2\in \mathbf{E}_x$ for some $x\in \mathbf{B}$ and $y_1\to y_2$ is an arrow in $\mathbf{E}_x$;
			\item $x_1\to x_2$ is a non-identity arrow in $\mathbf{B}$, $y_1$ and $y_2$ are objects of $\mathbf{E}_{x_1}$ and $\mathbf{E}_{x_2}$, respectively, and we have $\xi_1(x_1\to x_2)(y_1)=y_2$.
		\end{enumerate}
		The set of all arrows of $\mathbf{E}_\xi$ is the smallest set containing the simple arrows that is closed under composition, where
		\begin{itemize}
			\item for any $x\in\mathbf{B}$, composition of arrows of type a) from $\mathbf{E}_x$ is given by the composition in $\mathbf{E}_x$,
			\item composition of arrows of type b) (and identity arrows) is given by composition in $\mathbf{B}$.
		\end{itemize}
		Additionally, we impose the commutativity of squares: if $x_1\to x_2$ is an arrow in $\mathbf{B}$ and $y_1\to y_2$ is an arrow in $\mathbf{E}_{x_1}$, then the square below commutes in $\mathbf{E}_\xi$:
		\[
		\begin{tikzpicture}
			[scale=0.8,auto=center]
			\node (1) at (0,0) {$x_1$};
			\node (2) at (4,0)  {$x_2$};
			\node (3) at (0,1)  {$y_1$};
			\node (4) at (4,1) {$\xi_1(x_1\to x_2)(y_1)$};
			\node (5) at (0,3) {$y_2$};
			\node (6) at (4,3) {$\xi_1(x_1 \to x_2)(y_2)$};
            \node (7) at (-2,0) {$\mathbf{B}:$};
            \node (8) at (-2,2) {$\mathbf{E}_\xi:$};

			\draw[->]
			(1) edge (2)
			(3) edge (4)
			(5) edge (6)
			(3) edge (5)
			(4) edge (6);
			
		\end{tikzpicture}\]
		
		\item As a sheaf, $F_\xi$ sends an object $y\in \mathbf{E}_\xi$ with $\pi(y)=x$ to $F_x(y)$. Arrows $y_1\to y_2$ of type a) from some $\mathbf{E}_x$ are sent to the map $F_x(y_1\to y_2)$; arrows $y_1\to y_2$ of type b) with $\pi(y_1)=x_1, \pi(y_2)=x_2$ are sent to $\xi_2(x_1\to x_2)_{y_1}$. Composition arrows are sent to the appropriate composition of the above maps.
	\end{itemize}
\end{definition}

\begin{figure}[h]
	\[\begin{tikzpicture}
		
		\node(1) at (-1,.05){$\mathbf{B}:$};
		\node(2) at (0,0){$x_1$};
		\node(3) at (4,0){$x_2$};
		\node(4) at (-1.5,1.55){$\mathbf{Sh}:$};
		\node(5) at (0,1.5){$(\mathbf{C}_1,F_1)$};
		\node(6) at (4,1.5){$(\mathbf{C}_2,F_2)$};
		\node(7) at (-2.8,3.5){$\mathbf{E}_\xi:$};
		\node(8) at (0,2.7){$y_1$};
		\node(9) at (4,2.7){$z_1$};
		\node(10) at (0,4.3){$y_2$};
		\node(11) at (4,4.3){$z_2$};
		\node(12) at (-2.9,6.7){$\prescript{}{R}{Mod}:$};
		\node(13) at (0,5.9){$F_1(y_1)$};
		\node(14) at (4,5.9){$F_2(z_1)$};
		\node(15) at (0,7.5){$F_1(y_2)$};
		\node(16) at (4,7.5){$F_2(z_2)$};

		\draw[->,tips=proper]
		(2) edge (3)
		(6) edge node[above] {$\scriptstyle \xi(x_1\to x_2)=\gamma$}(5)
		(8) edge[red] node[above] {$\scriptstyle \gamma_1(y_1)=z_1$} (9)
		(8) edge[blue] node[left] {$\scriptstyle  y_1\to y_2\in\mathbf{C}_1$}(10)
		(9) edge[blue] node[right] {$\scriptstyle  z_1\to z_2\in\mathbf{C}_2$}(11)
		(10) edge[red] node[below] {$\scriptstyle  \gamma_1(y_2)=z_2$} (11)
		(8) edge[purple] (11)
		(14) edge[red] node[above] {$\scriptstyle  \gamma_2|_{y_1}$}(13)
		(15) edge[blue] node[left] {$\scriptstyle  F_1(y_1\to y_2)$}(13)
		(16) edge[purple] (13)
		(16) edge[red] node[below] {$\scriptstyle \gamma_2|_{y_2}$}(15)
		(16) edge[blue] node[right] {$\scriptstyle  F_2(z_1\to z_2)$}(14)
		(2,0.3) edge node[right] {$\xi$} (2,1.2)
		(2,4.6) edge node[right] {$F_\xi$} (2,5.6);
		
	\end{tikzpicture}\]
	\caption{Constructing the total sheaf $(\mathbf{E}_\xi,F_\xi)$. Arrows of type a) are in blue, arrows of type b) are in red, and composition arrows are in purple.}
	\label{gluedup}
\end{figure}

\begin{proposition}\label{Composition arrows} Any composition arrow $f$ in $\mathbf{E}_\xi$ is equal to $gh$, for some type a) arrow $g$ and some type b) arrow $h$.
\end{proposition}
Since compositions of arrows of type a) or b) are still arrows of the same type, a composition arrow in $E$ is an alternating sequence of arrows of type a) and b). If $f$  starts with a type a) arrow and ends with a type b), then the reader can convince themselves of the conclusion of the proposition by considering the commutativity of squares in Definition \ref{TotalSheaf} and the diagram in Figure \ref{CompositionArrow}.

\begin{proposition} The pair $(\mathbf{E}_\xi,F_\xi)$ above is an object of $\mathbf{Sh}$.
\end{proposition}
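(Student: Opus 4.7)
The plan is to verify separately the two pieces of an $\mathbf{Sh}$-object. That $E$ is a small category comes essentially for free: $\Obj(E)$ is a disjoint union of small sets, and the arrow set is defined by taking all formal composites of simple arrows and quotienting by the specified relations (type a) composites respect the $E_x$-composition, type b) composites respect the $B$-composition via $\xi_1$, and the imposed commutativity squares). So the real content lies in verifying that the prescriptions for $F$ on simple arrows extend consistently to a contravariant functor $F: E \to \prescript{}{R}{Mod}$.

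To do this, I would check compatibility with each generating relation in turn. For composites of type a) arrows within some $E_x$, the identity $F(g)\circ F(f) = F(fg)$ is immediate since $F_x$ is itself a contravariant functor $E_x \to \prescript{}{R}{Mod}$. For composites of type b) arrows along $x_1 \to x_2 \to x_3$ with $y_i \in E_{x_i}$ related by $\xi_1$, I would apply the composition rule $\delta\circ\gamma = (\gamma_1\delta_1,\, \delta_2\gamma_2)$ in $\mathbf{Sh}$ to $\delta = \xi(x_1\to x_2)$ and $\gamma = \xi(x_2\to x_3)$, and then invoke the contravariant functoriality of $\xi$ to read off the identity
\[
\xi_2(x_1\to x_2)_{y_1}\circ \xi_2(x_2\to x_3)_{y_2} = \xi_2(x_1\to x_3)_{y_1},
\]
which is exactly the image under $F$ that is demanded of the composite type b) arrow $y_1\to y_3$. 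Finally, for an imposed commutativity square coming from $x_1 \to x_2$ in $B$ and $y_1\to y_2$ in $E_{x_1}$, with $\gamma := \xi_1(x_1\to x_2)$, applying $F$ contravariantly produces four maps in $\prescript{}{R}{Mod}$: two instances of $\xi_2(x_1\to x_2)$ evaluated at $y_1$ and $y_2$, together with $F_{x_1}(y_1\to y_2)$ and $F_{x_2}(\gamma(y_1)\to\gamma(y_2))$. I would then identify the resulting diagram as precisely the naturality square of the natural transformation $\xi_2(x_1\to x_2): F_{x_2}\circ \xi_1(x_1\to x_2) \to F_{x_1}$ at the arrow $y_1\to y_2$, whence it commutes. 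Identities and arbitrary composites reduce to combinations of these three checks.

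The main obstacle is bookkeeping rather than mathematical depth: in both the type b) composition and the naturality-square identification one must keep careful track of which way arrows in $B$ and $E$ point, and account for the left/right whiskering implicit in $\delta_2 \circ \gamma_2$. Once the indices and variances are aligned correctly, each sub-claim is a one-line appeal to either the functoriality of $F_x$, the contravariant functoriality of $\xi$, or the naturality of $\xi_2(x_1\to x_2)$, and no ideas beyond unwinding the definitions are required.
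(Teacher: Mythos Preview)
Your proposal is correct and follows the same approach as the paper's proof, which is a two-sentence sketch asserting that $E$ is easily seen to be a small category and that functoriality of $F$ follows from the functoriality of $\xi$ and of each $F_x$. You have simply unpacked that sketch into its constituent checks (type a) composites via $F_x$, type b) composites via functoriality of $\xi$, and the commutativity squares via naturality of $\xi_2$, which is part of the data of a $\mathbf{Sh}$-morphism and hence of $\xi$ being a functor).
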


It is easily checked that $\mathbf{E}_\xi$ is a small category. Furthermore, since the action of $F_\xi$ on composition arrows is defined as the composition of actions on simple arrows, functoriality of $F_\xi$ follows from the functoriality of $\xi$ and $F_x$ for all $x\in \mathbf{B}$.

\begin{figure}
\begin{tikzpicture}[scale=1.3,auto=center,tips=proper]
    every edge/.style = {draw, -Latex}
    
  \node (1) at (0,0) {};
  \node (2) at (0,0.5)  {};
  \node (3) at (0.5,0.5)  {};
  \node (4) at (0.5,1) {};
  \node (5) at (1,1) {};
  \node (7) at (1.5,1.5) {};
  \node (8) at (1.5,2) {};
  \node (9) at (2,2) {};
  \node (10) at (-1,1) {a)};
  \node (11) at (1,-0.7) {b)};
  \node (12) at (-0.25,-0.25) {};
  \node (13) at (-0.25,2.25) {};
  \node (14) at (2.25,-0.25) {};
  \node (15) at (0.5,0) {};
  \node (16) at (1,0) {};
  \node (17) at (1.5,0) {};
  \node (18) at (2,0) {};
  \node (19) at (1,0.5) {};
  \node (20) at (1.5,0.5) {};
  \node (21) at (2,0.5) {};
  \node (22) at (1.5,1) {};
  \node (23) at (2,1) {};
  \node (24) at (2,1.5) {};

 \draw[-]
  (12.center) edge (13.center)
  (12.center) edge (14.center);
  
 \draw[<-,tips=proper,line width=0.3mm]
  (2.center) edge (1.center)
  (3.center) edge (2.center)
  (4.center) edge (3.center)
  (5.center) edge (4.center)
  (8.center) edge (7.center)
  (9.center) edge (8.center);
  
 \draw[<-,tips=proper]
  (24.center) edge (7.center)
  (23.center) edge (22.center)
  (21.center) edge (20.center)
  (19.center) edge (3.center)
  (22.center) edge (20.center)
  (20.center) edge (17.center)
  (5.center) edge (19.center)
  (19.center) edge (16.center)
  (3.center) edge (15.center);
  
 \draw[<-,tips=proper,dashed]
  (9.center) edge (18.center)
  (18.center) edge (1.center);
  
 \draw[dotted]
  (5) edge (7)
  (22) edge (7)
  (23) edge (24)
  (5) edge (22)
  (19) edge (20)
  (16) edge (17);

\end{tikzpicture}
\caption{An alternating sequence of a) and b) arrows and the resulting commutative grid.}
\label{CompositionArrow}
\end{figure}

\section{Cochain modules of $(\mathbf{C},F)$}\label{sec:cochain}

A sheaf $F$ on $\mathbf{C}$ acts as a covariant functor on the poset of simplices of the nerve $N\mathbf{C}$ of $\mathbf{C}$. For $\sigma=x_0\to x_1\to\cdots\to x_i$, $\tau=x_{i_0}\to\cdots\to x_{i_k}$, we set $F(\sigma)=F(x_0)$, and
\[
F(\tau\subseteq \sigma)=F(x_0\to x_{i_0})=F(x_{i_0})\to F(x_0),
\]
where the arrow $x_{0}\to x_{i_0}$ is given by the appropriate composition of arrows in $\sigma$. The module for the $k$-cochains $(k\geq0)$ is
\[
\mathcal{S}^k(N\mathbf{C};F)=\prod_{\sigma} F(\sigma),
\]
where the product ranges over all $k$-simplices $\sigma=x_0\to\cdots\to x_k$. For $k<0$, $\mathcal{S}^k(N\mathbf{C};F)=0$. The differential $d^k:\mathcal{S}^{k-1}(N\mathbf{C};F)\to \mathcal{S}^{k}(N\mathbf{C};F)$  for $k>0$ is given by
\[
(d^k u)|_{\sigma}=\sum_{j=0}^k (-1)^j F(\sigma_j\subseteq \sigma)(u|_{\sigma_j}),
\]
where $\sigma=x_0\to x_1\to\cdots\to x_k$, $u\in\mathcal{S}^{k-1}(N\mathbf{C};F)$, and $\sigma_j=x_0\to\cdots\to \hat{x}_{j}\to\cdots\to x_k$. For $k\leq 0$, $d^k=0$. It is easily seen that $\mathcal{S}^\bullet(N\mathbf{C};F)$ is a chain complex. For notational brevity, we will also write it as $\mathcal{S}^\bullet(\mathbf{C};F)$.

Given a $\mathbf{Sh}$-morphism $\gamma:(\mathbf{C},F)\to (\mathbf{D},G)$, there is an induced map $\gamma^\bullet:\mathcal{S}^\bullet(\mathbf{C};F)\to\mathcal{S}^\bullet(\mathbf{D};G)$ defined by
\[
\gamma^\bullet u|_\sigma=\gamma_{2 x_0}(u|_{\gamma_1(\sigma)}).
\]
\begin{lemma}\label{chainmap}
	The induced map $\gamma^\bullet$ is a well-defined chain map.
\end{lemma}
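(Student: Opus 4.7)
The plan is to treat the two assertions separately: first that $\gamma^*u$ lies in $\mathcal{S}^k(Q,G)$, and then that $\gamma^*$ commutes with $d$. The first will be an immediate unpacking of the data, while the second will reduce to the naturality square available in the definition of a morphism of $\mathbf{Sh}$.

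For well-definedness, I would fix a $k$-simplex $\sigma=x_0\to\cdots\to x_k$ of $Q$, observe that $\gamma_1(\sigma)=\gamma_1(x_0)\to\cdots\to\gamma_1(x_k)$ is a $k$-simplex of $P$ by the covariance of $\gamma_1$, and then check that the conventions of this section give $F(\gamma_1(\sigma))=F(\gamma_1(x_0))$ and $G(\sigma)=G(x_0)$. The component $\gamma_2{}_{x_0}\colon F(\gamma_1(x_0))\to G(x_0)$ then sends $u|_{\gamma_1(\sigma)}$ into $G(\sigma)$, so indeed $\gamma^*u\in\mathcal{S}^k(Q,G)$.

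For the chain-map property, I would expand both $d^k(\gamma^*u)|_\sigma$ and $\gamma^*(d^ku)|_\sigma$ on a generic $k$-simplex $\sigma=x_0\to\cdots\to x_k$ of $Q$, using that face deletion commutes with $\gamma_1$ in the sense $(\gamma_1\sigma)_j=\gamma_1(\sigma_j)$. The restriction map $G(\sigma_j\subseteq\sigma)$ equals $\id_{G(x_0)}$ for every $j\geq 1$, and likewise $F((\gamma_1\sigma)_j\subseteq\gamma_1\sigma)=\id_{F(\gamma_1(x_0))}$ for $j\geq 1$; so the summands indexed by $j\geq 1$ on the two sides match term-by-term, producing $(-1)^j\gamma_2{}_{x_0}(u|_{\gamma_1(\sigma_j)})$ each. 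The identity therefore collapses to the $j=0$ comparison
\[
G(x_0\to x_1)\circ\gamma_2{}_{x_1} \;=\; \gamma_2{}_{x_0}\circ F(\gamma_1(x_0)\to\gamma_1(x_1)),
\]
which is exactly the naturality square for $\gamma_2\colon F\gamma_1\to G$ at the arrow $x_0\to x_1$ displayed in the definition of a morphism of $\mathbf{Sh}$.

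The only real obstacle is combinatorial bookkeeping: tracking the face-deletion conventions through $\gamma_1$ and recognising that the asymmetry between $j=0$ and $j\geq 1$ in the coboundary formula is precisely the slot where naturality is invoked. No further data of the morphism $\gamma$ is needed, so once this identification is made the argument is a straightforward unfolding of definitions.
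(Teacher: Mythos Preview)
Your proposal is correct and is precisely the ``easy calculation, using the naturality of $\gamma_2$'' that the paper alludes to without spelling out. You have correctly isolated the point where naturality enters---the $j=0$ face---and the bookkeeping for the remaining faces is accurate.
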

This follows from an easy calculation, using the naturality of $\gamma_2$.

We have thus defined a covariant functor $\mathcal{S}^\bullet:\mathbf{Sh}\to\mathbf{Ch}_R$, from pairs of small categories and sheaves to chain complexes over $R$. In particular, for each $q\in\bZ$ we have a covariant functor $\mathcal{S}^q:\mathbf{Sh}\to\prescript{}{R}{Mod}$. Since homology is a functor from chain complexes to graded $R$-modules, we also have a covariant functor $H^\bullet\mathcal{S}^\bullet:\mathbf{Sh}\to\mathbf{Gr}\prescript{}{R}{Mod}$. In particular, for each $q\in\bZ$ we have a covariant functor $H^q\mathcal{S}^\bullet:\mathbf{Sh}\to\prescript{}{R}{Mod}$.

Given a bundle $\xi:\mathbf{B}\to \mathbf{Sh}$, the above gives us two sheaves on $\mathbf{B}$. For any $q\in\bZ$ the \emph{$q$-cochain sheaf} of $\mathbf{B}$ is the sheaf $\mathcal S^q:\mathbf{B}\to \prescript{}{R}{Mod}$, i.e.~the composition
\[
\mathbf{B}\overset{\xi}{\longrightarrow}\mathbf{Sh}\overset{ \mathcal S^q}{\longrightarrow}\prescript{}{R}Mod.
\]
Similarly, the \emph{homology of the fibres} sheaf of $\mathbf{B}$ is the sheaf $\mathcal H_{fib}^q:\mathbf{B}\to \prescript{}{R}{Mod}$, i.e.~the composition
\[
\mathbf{B}\overset{\xi}{\longrightarrow}\mathbf{Sh}\overset{\mathcal S^\bullet}{\longrightarrow}\mathbf{Ch}_R\overset{\mathcal H^q}{\longrightarrow}\prescript{}{R}Mod.
\]
Explicitly, if $x\in \mathbf{B}$, then $\mathcal H_{fib}^q(x)=H^q(\mathbf{E}_x;F_x)$.

\section{The bicomplex $\mathcal{S}^{p}(\mathbf{B};\mathcal{S}^{q})$}\label{sec:bicomplex}

We want to construct a bicomplex by taking the $p$-cochain sheaf of  $(\mathbf{B},\mathcal{S}^q)$.

Let $\xi:\mathbf{B}\to\mathbf{Sh}$ be a bundle of sheaves and suppose $x\to y$ is an arrow in $\mathbf{B}$. We have the commutative square
\[
\begin{tikzpicture}
	
	[scale=1.4,auto=center,tips=proper]
	every edge/.style = {draw, -Latex}
	
	\node (1) at (0,0) {$\mathcal{S}^{q-1}(\mathbf{E}_y;F_y)$};
	\node (2) at (0,1.5)  {$\mathcal{S}^{q-1}(\mathbf{E}_x;F_x)$};
	\node (3) at (3.5,1.5)  {$\mathcal{S}^{q}(\mathbf{E}_x;F_x)$};
	\node (4) at (3.5,0) {$\mathcal{S}^{q}(\mathbf{E}_y;F_y)$};
	
	\draw[->,tips=proper]
	(1) edge (2)
	(2) edge node[above] {$d$} (3);
	
	\draw[->,tips=proper]
	(4) edge (3)
	(1) edge node[above] {$d$} (4);
\end{tikzpicture}\]
\noindent where the vertical maps are the chain map from Lemma \ref{chainmap} induced by $\xi(x\to y)$. In particular, the differential $d$ induces a $\mathbf{Sh}$-morphism  $\gamma:(\mathbf{B},\mathcal{S}^{q-1})\to(\mathbf{B},\mathcal{S}^q)$, where $\gamma_1$ is the identity functor and $\gamma_2$ are the differentials at each object of $\mathbf{B}$. This gives us the induced map 
\[
\gamma^\bullet:\mathcal{S}^\bullet(\mathbf{B};\mathcal{S}^{q-1})\to\mathcal{S}^\bullet(\mathbf{B};\mathcal{S}^q).
\]

Applying this for all $q\in\bZ$ gives a grid of commutative squares of the form:
\[
\begin{tikzpicture}[scale=1,auto=center,tips=proper]
	\node (1) at (0,0) {$\mathcal{S}^{p-1}(\mathbf{B};\mathcal{S}^{q-1})$};
	\node (2) at (0,1.2)  {$\mathcal{S}^{p-1}(\mathbf{B};\mathcal{S}^q)$};
	\node (3) at (3,1.2)  {$\mathcal{S}^{p}(\mathbf{B};\mathcal{S}^{q})$};
	\node (4) at (3,0) {$\mathcal{S}^{p}(\mathbf{B};\mathcal{S}^{q-1})$};
	
	\draw[->,tips=proper]
	(1) edge (2)
	(2) edge (3);
	
	\draw[->,tips=proper]
	(4) edge (3)
	(1) edge (4);
	
\end{tikzpicture}\]
To make the squares anti-commute instead, we apply the usual `Jedi sign trick', i.e.~we include a factor of $-1$ in every other horizontal map. We will be concerned with this bicomplex in particular in later chapters, so we will sometimes refer to it as just $\mathcal{K}_\xi^{p,q}$. Explicitly, we have
\[
\mathcal{K}_\xi^{p,q}=\mathcal{S}^{p}(\mathbf{B};\mathcal{S}^{q});
\]
if we denote 
\[
\sigma=x_0\to \ldots \to x_p\in N\mathbf{B} \text{ and }\tau=y_0\to\ldots\to y_q\in N\mathbf{E}_{x_0},
\]
then the vertical differential  $d^v:\mathcal{S}^p(\mathbf{B};\mathcal{S}^{q-1})\to\mathcal{S}^p(\mathbf{B};\mathcal{S}^{q})$ is defined by
\[
(d^vu)|_{\sigma,\tau}=F_{x_0}(y_0\to y_1)(u|_{\sigma,\tau_0})+\sum_{j=1}^q(-1)^j(u|_{\sigma,\tau_j})
\]
and the horizontal differential  $d^h:\mathcal{S}^{p-1}(\mathbf{B};\mathcal{S}^{q})\to\mathcal{S}^p(\mathbf{B};\mathcal{S}^{q})$ is defined by
\[
(d^hu)|_{\sigma,\tau}=(-1)^{p+q}\left(\gamma_{y_0}(u|_{\sigma_0,\gamma_1(\tau)})+\sum_{i=1}^p(-1)^i(u|_{\sigma_j,\tau})\right),
\]
where $\xi_2(x_0\to x_1)=\gamma$.

We can place the modules $\mathcal{K}_\xi^{p,q}$ on the $E_0$ page of a spectral sequence and use the vertical maps as the differentials on that page. We can further use the quotients of the horizontal maps for the differentials on the $E_1$ page.

\begin{proposition}
	The $E_2$ page of the spectral sequence defined above has
	\[
	E_2^{p,q}=H^p(\mathbf{B};\mathcal{H}^q_{fib}(\xi)).
	\]
\end{proposition}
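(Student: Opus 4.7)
The plan is to compute the first two pages of the spectral sequence directly from the bicomplex $\mathcal{K}^{p,q}=\mathcal{S}^p(B,\mathcal{S}^q)$, checking in turn that the $E_1$ entries reassemble into the cochain complex of $B$ with values in $\mathcal{H}^q_{fib}(\xi)$ and that the induced horizontal differential is the expected one.

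First I would unpack $E_0^{p,q}=\mathcal{S}^p(B,\mathcal{S}^q)=\prod_\sigma\mathcal{S}^q(E_{x_0},F_{x_0})$, where the product ranges over $p$-simplices $\sigma=x_0\to\cdots\to x_p$ of $NB$. The vertical differential $d^v$ acts on a factor indexed by $\sigma$ purely by the cochain differential of $\mathcal{S}^*(E_{x_0},F_{x_0})$ (the extra sign introduced by the Jedi sign trick is uniform in $q$ and irrelevant for taking cohomology). Since products are exact in $\prescript{}{R}{\mathbf{Mod}}$, cohomology commutes with $\prod_\sigma$, giving
\[
E_1^{p,q}=H^q_{d^v}\!\bigl(\mathcal{K}^{p,*}\bigr)=\prod_\sigma H^q(E_{x_0},F_{x_0})=\prod_\sigma \mathcal{H}^q_{fib}(x_0)=\mathcal{S}^p(B,\mathcal{H}^q_{fib}(\xi)).
\]

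Next I would verify that the horizontal differential induced on $E_1$ matches the cochain differential of the sheaf $\mathcal{H}^q_{fib}(\xi)$ on $B$. Reading off the formula for $d^h$, at a fixed $\sigma=x_0\to\cdots\to x_p$ the only term that is not a restriction to a face is $\gamma_2{}_{y_0}(u|_{\sigma_0,\gamma_1(\tau)})$, which is precisely the value, at $\tau$, of the induced chain map $\xi(x_0\to x_1)^*:\mathcal{S}^*(E_{x_1},F_{x_1})\to\mathcal{S}^*(E_{x_0},F_{x_0})$ applied to $u|_{\sigma_0}$. Passing to cohomology classes, this chain map descends (by Lemma \ref{chainmap} and functoriality of $H^q$) to the map $\mathcal{H}^q_{fib}(\xi)(x_0\to x_1)$. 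The remaining terms $\sum_{i=1}^p(-1)^i u|_{\sigma_i,\tau}$ are simply the alternating restrictions to sub-simplices of $\sigma$. Up to the global sign $(-1)^{p+q}$ — again irrelevant for cohomology — this is exactly the formula from \S\ref{sec:cochain} for the differential of $\mathcal{S}^*(B,\mathcal{H}^q_{fib}(\xi))$.

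Taking horizontal cohomology then yields
\[
E_2^{p,q}=H^p_{d^h}\!\bigl(E_1^{*,q}\bigr)=H^p\!\bigl(\mathcal{S}^*(B,\mathcal{H}^q_{fib}(\xi))\bigr)=H^p(B,\mathcal{H}^q_{fib}(\xi)),
\]
as required. The main obstacle, such as it is, lies in the identification step: one must be careful to check that the map induced on the cohomology of the fibre $\mathcal{S}^*(E_{x_1},F_{x_1})$ by the first summand of $d^h$ is literally the map $\mathcal{H}^q_{fib}(\xi)(x_0\to x_1)$ used in defining the sheaf $\mathcal{H}^q_{fib}(\xi)$. This is however immediate from the construction of $\mathcal{H}^q_{fib}(\xi)$ as the composition $\mathcal{H}^q\circ\mathcal{S}^*\circ\xi$, and the contravariance conventions of $\mathbf{Sh}$-morphisms line up correctly.
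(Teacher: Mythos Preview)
Your proposal is correct and takes essentially the same approach as the paper: identify $E_1^{p,q}$ with $\mathcal{S}^p(B,\mathcal{H}^q_{fib}(\xi))$ using the fact that cohomology commutes with direct products, then take horizontal cohomology. Your write-up is in fact more careful than the paper's, which leaves implicit the verification that the induced $E_1$ differential agrees with the cochain differential of the sheaf $\mathcal{H}^q_{fib}(\xi)$.
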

\begin{proof}
	Note that the differentials on the $E_2$ page are of degree $(2,-1)$. Consider the following diagram
	\[\begin{tikzpicture}[scale=1,auto=center,tips=proper]
		\node (1) at (1,1.5) {$\xi$};
		\node (2) at (3,1.5) {$(\mathbf{B},\mathcal{S}^\bullet)$};
		\node (3) at (7,1.5) {$\mathcal{S}^p(\mathbf{B};\mathcal{S}^\bullet)$};
		\node (4) at (11,1.5) {$H^\bullet(\mathcal{S}^p(\mathbf{B};\mathcal{S}^\bullet))$};
		\node (5) at (7,0) {$(\mathbf{B},\mathcal{H}^\bullet_{fib}(\xi))$};
		\node (6) at (11,0) {$\mathcal{S}^p(\mathbf{B};\mathcal{H}^\bullet_{fib}(\xi))$};
		
		\draw[->,tips=proper]
		(1) edge node[above] {$\mathcal{S}^\bullet$} (2)
		(2) edge node[above] {$\mathcal{S}^p$} (3)
		(3) edge node[above] {$H^\bullet$} (4)
		(2.east) edge[bend right] node[auto] {$H^\bullet$} (5.west)
		(5) edge node[auto] {$\mathcal{S}^p$} (6);

	\end{tikzpicture}\]
	
	The top path is how we get the modules in a given column on the $E_1$ page -- we take vertical homology of a column in $E_0$. On the other hand, taking horizontal homology of rows formed by $\mathcal{S}^p(\mathbf{B};\mathcal{H}^q_{fib}(\xi))$ clearly gives the required modules $H^p(\mathbf{B};\mathcal{H}^q_{fib}(\xi))$. It is then enough to show that the two graded modules at the ends of the two paths are equal for each $p\in\bZ$. This follows directly from cohomology commuting with the direct product.
\end{proof}

Now, there is a total complex associated to $\mathcal{K}_\xi^{p,q}$. We will denote it as $T_\xi^\bullet$. Explicitly,
\[
T^n_\xi:=\prod_{p+q=n}\mathcal{K}_\xi^{p,q},
\]
with $d=d^h+d^v$. Then, from the general construction of a spectral sequence from a bicomplex (see \cite{weibel}) and from the above proposition, we have the sheaf cohomological version of \cite[Proposition 2.2]{ET12}:

\begin{proposition}\label{Spectral} If $\xi:\mathbf{B}\to\mathbf{Sh}$ is a bundle of sheaves, then there is a spectral sequence
	\[
	E_2^{p,q}=H^p(B;\mathcal{H}_{fib}^q)\Longrightarrow H^\bullet(T^\bullet_\xi).
	\]
\end{proposition}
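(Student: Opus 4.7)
The plan is essentially assembly: nearly all of the work has already been done in setting up the bicomplex $\mathcal{K}^{p,q}$ and in the preceding proposition. I would simply invoke the standard spectral sequence associated to a first-quadrant bicomplex and stitch in the previous identification of the $E_2$ page.

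First, I would apply the column filtration
\[
F^p\mathcal{T}^n = \bigoplus_{p'\geq p} \mathcal{K}^{p', n-p'}
\]
to the total complex $(\mathcal{T}^*, d^h + d^v)$. The standard construction then yields a spectral sequence with $E_0^{p,q} = \mathcal{K}^{p,q}$ and $d_0 = d^v$; taking vertical cohomology column by column gives the $E_1$ page, on which the induced differential $d_1$ is the one coming from $d^h$ (up to the sign convention already installed).

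Second, I would cite the previous proposition directly to conclude
\[
E_2^{p,q} = H^p(B, \mathcal{H}^q_{fib}(\xi)),
\]
which is precisely what taking horizontal cohomology on $E_1$ produces.

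Finally, I would verify convergence. Because $\mathcal{S}^k(P,F) = 0$ whenever $k < 0$ by definition, the bicomplex $\mathcal{K}^{p,q}$ is supported in the first quadrant, so the column filtration on each $\mathcal{T}^n$ is finite (of length at most $n+1$). This boundedness immediately gives strong convergence of the spectral sequence to $H^*(\mathcal{T}^*(E,F))$, which is the stated conclusion. There is no real obstacle here; the proposition is a packaging step, with only the routine first-quadrant convergence check to perform.
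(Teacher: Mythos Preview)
Your proposal is correct and matches the paper's approach exactly: the paper's proof is a single sentence invoking the general spectral sequence of a bicomplex together with the preceding proposition identifying the $E_2$ page. Your write-up simply makes explicit the column filtration and the first-quadrant convergence argument that the paper leaves implicit.
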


\section{Grid traversals}\label{sec:traversals}

For a given poset bundle of sheaves $\xi$, we define a chain map \hbox{$\omega:\mathcal{S}^\bullet(\mathbf{E}_\xi;F_\xi)\to T^\bullet_\xi$}, where $\mathcal{S}^\bullet(\mathbf{E}_\xi;F_\xi)$ is the chain complex constructed in \S \ref{sec:cochain} on the total sheaf of $\xi$ (recall Definition \ref{TotalSheaf}), and $T^\bullet_\xi$ is the total complex associated to the bicomplex $\mathcal{K}_\xi^{\bullet,\bullet}$ constructed in \S \ref{sec:bicomplex}. To do that, if $\sigma=x_0\to\cdots\to x_p\in N\mathbf{B}$ and $\tau=y_0\to\cdots\to y_q\in N\mathbf{E}_{x_0}$, then to each pair $(\sigma,\tau)$ we will associate a (signed) combination of all traversals of a particular grid in $\mathbf{E}_\xi$.

To form this grid, we lay out $\sigma$ and $\tau$ (see Figure \ref{GridsTraversals}) and complete the grid using the morphisms $\xi(x_i\to x_{i+1})$ -- on the figure we denote $y_{0,j}=y_j$ and $y_{i+1,j}=\xi_1(x_i\to x_{i+1})(y_{i,j})$.
\begin{figure}
\[
\begin{tikzpicture}[xscale=1.3,auto=center,tips=proper]
	
	\node (1) at (0,0) {$\sigma$};
	\node (2) at (1,0)  {$x_0$};
	\node (3) at (2,0)  {$x_1$};
	\node (4) at (3,0) {$\cdots$};
	\node (5) at (4,0) {$x_p$};
	\node (7) at (1,1) {$y_{0,0}$};
	\node (8) at (1,2) {$\vdots$};
	\node (9) at (1,3) {$y_{0,q}$};
	\node (10) at (1,4) {$\tau$};
	\node (11) at (1,3.5) {$\shortparallel$};
	\node (12) at (0.5,0) {$=$};
	\node (13) at (2,1) {$y_{1,0}$};
	\node (14) at (2,2) {$\vdots$};
	\node (15) at (2,3) {$y_{1,q}$};
	\node (16) at (3,1) {$\cdots$};
	\node (17) at (3,3) {$\cdots$};
	\node (18) at (4,1) {$y_{p,0}$};
	\node (19) at (4,2) {$\vdots$};
	\node (20) at (4,3) {$y_{p,q}$};

	\draw[->,tips=proper]
	(2) edge (3)
	(3) edge (4)
	(4) edge (5)
	(7) edge (8)
	(8) edge (9)
	(7) edge (13)
	(13) edge (14)
	(14) edge (15)
	(9) edge (15)
	(13) edge (16)
	(15) edge (17)
	(16) edge (18)
	(17) edge (20)
	(18) edge (19)
	(19) edge (20);
	
\end{tikzpicture}\qquad\qquad
\begin{tikzpicture}[scale=1.3,auto=center,tips=proper]
	
	\node (1) at (0,0) {};
	\node (2) at (1,0)  {};
	\node (3) at (2,0)  {};
	\node (4) at (0,1) {};
	\node (5) at (1,1) {};
	\node (6) at (2,1) {};
	\node (7) at (3,0) {};
	\node (8) at (3,1)  {};
	\node (9) at (3,2)  {};
	\node (10) at (0,2) {};
	\node (11) at (1,2) {};
	\node (12) at (2,2) {};  
	
	\draw[->,tips=proper]
	
	(2) edge (3)
	(4) edge (5)
	(5) edge (6)
	(1) edge (4)
	(3) edge (6)
	(3) edge (7)
	(10) edge (11)
	(4) edge (10)
	(8) edge (9)
	(6) edge (12)
	(6) edge (8)
	(7) edge (8);
	
	\draw [->, tips=proper, line width=.35mm]
	(1) edge (2)
	(2) edge (5)
	(5) edge (11)
	(11) edge (12)
	(12) edge (9);

\end{tikzpicture}
\]
\caption{The grid of $(\sigma,\tau)$ (left) and an example grid traversal (right).}
\label{GridsTraversals}
\end{figure}

A \emph{grid traversal} $z\in N\mathbf{E}_\xi$ of the grid of $(\sigma,\tau)$ is a chain of length $(p+q)$ of arrows in the grid. In particular, each arrow in $z$ is either 
	\[
	\xi_1(x_0\to x_i)(y_j\to y_{j+1})  \text{ or } y_{i,j}\to \xi_1(x_i\to x_{i+1})(y_{i,j}).
	\]
	Note that these correspond to type a) and type b) in Definition \ref{TotalSheaf}.

For each grid traversal $z$ of the grid of $(\sigma,\tau)$, define
	\[
	m(z)=\#\{\text{squares in the grid below and to the right of }z\}.
	\]
	Furthermore, define $\varsigma(q)=\left\lceil{\dfrac{q}{2}}\right\rceil=\min\left\{n\in\bZ\mid n\geq \dfrac{q}{2}\right\}$. 
	
	We can now define the chain map we are interested in. The map $\omega:\mathcal S^\bullet(\mathbf{E}_\xi;F_\xi)\to T^\bullet_\xi$ is defined, for any $u\in \mathcal S^\bullet(\mathbf{E}_\xi;F_\xi)$, by
	\[
	(\omega u)|_{\sigma,\tau}=(-1)^{\varsigma(q)}\sum_z (-1)^{m(z)}u|_z,
	\]
	where the sum is taken over all traversals $z$ of the grid of $(\sigma,\tau)$.

\begin{proposition}
	The map $\omega$ defined above is a chain map.
\end{proposition}
\begin{proof}
    The argument here is analogous to the argument showing that a similar map is a chain map in \cite[Proposition 5.2]{ET12}.
\end{proof}

\section{Technical Tools}\label{sec:technical}

Up to this point, for a bundle of sheaves $\xi:\mathbf{B}\to\mathbf{Sh}$, we have constructed the total sheaf $(\mathbf{E}_\xi,F_\xi)$ and its simplicial complex $\mathcal{S}^\bullet(\mathbf{E}_\xi;F_\xi)$, as well as the bicomplex $\mathcal{K}_\xi^{\bullet,\bullet}$ and its total complex $T_\xi^\bullet$. We know that the spectral sequence of the bicomplex converges to $H^\bullet T^\bullet_\xi$, but we would like to identify cases where it converges to the cohomology of the total sheaf. In this section, we show that, under certain (fairly strong) assumptions, the chain map $\omega$ is a quasi-isomorphism. This puts the results of \cite{ET12} into a sheaf cohomology setting.

\begin{definition}
	\normalfont A bundle of sheaves $\xi:\mathbf{B}\to\mathbf{Sh}$ is a \emph{poset bundle of sheaves} if both $\mathbf{B}$ and $\mathbf{E}_x$ for all $x\in\mathbf{B}$ are finite posets.
\end{definition}

From this point on, all small categories in sight are assumed to be finite posets. If $x,y\in \mathbf{B}$, we say that \emph{$y$ covers $x$} (denoted $x\prec y$) if, whenever $z\in \mathbf{B}$ is such that $x\leq z\leq y$, we have $z=x$ or $z=y$. We also say that $\mathbf{B}$ \emph{has a $0$} (or is a poset with $0$) if $\mathbf{B}$ has a unique minimal element $0\in \mathbf{B}$. 

Now, for an element $x\in\mathbf{B}$, define $\mathbf{B}_{\geq x}$ and $\mathbf{B}_{\not\geq x}$ to be the full subcategories of $\mathbf{B}$ with
\[
\Obj\mathbf{B}_{\geq x}:=\{z\in \Obj\mathbf{B}\mid x\leq z\}\, \text{ and }\, \Obj\mathbf{B}_{\not\geq x}:=\Obj\mathbf{B}\backslash \Obj\mathbf{B}_{\geq x}.
\]
Note that both $\mathbf{B}_{\geq x}$ and $\mathbf{B}_{\not\geq x}$ inherit the poset structure of $\mathbf{B}$. We will occasionally omit $\Obj$ when we refer to the objects of a poset category if the meaning is clear from context.

The key property we will exploit is the following.

\begin{definition}\label{admissible}
	\normalfont Assume $\mathbf{B}$ is a poset.
	\begin{enumerate}
		\item Let $\mathbf{B}_1$ and $\mathbf{B}_2$ be full subposets of $\mathbf{B}$. We call $\mathbf{B}$ \emph{admissible for} $\mathbf{B}_1,\mathbf{B}_2$ if
		\begin{itemize}
			\item $\mathbf{B}_1\cap \mathbf{B}_2=\emptyset$,
			\item $\mathbf{B}_1\cup\mathbf{B}_2=\mathbf{B}$,
			\item there are no $x\in\mathbf{B}_2$ and $y\in \mathbf{B}_1$ with $x\leq y$, and
			\item for all $x\in\mathbf{B}_1$, the full subposet $\{y\in\mathbf{B}_2\mid x\leq y\}\subseteq \mathbf{B}_2$ is non-empty and has a unique minimum.
		\end{itemize}
		\item We call $\mathbf{B}$ \emph{admissible for} $x\in\mathbf{B}$ if $\mathbf{B}$ is admissible for $\mathbf{B}_{\not\geq x},\mathbf{B}_{\geq x}$. Note that the first three requirements of admissibility are automatically satisfied for $\mathbf{B}_{\not\geq x},\mathbf{B}_{\geq x}$ (see bottom of Figure \ref{MoreIntersection}). We also denote the poset in the last requirement by
		\[
		\mathbf{B}_{\geq x}^{\geq y}:=\{z\in \mathbf{B}_{\geq x}\mid y\leq z\}=\mathbf{B}_{\geq x}\cap\mathbf{B}_{\geq y}.
		\]
		\item We call $\mathbf{B}$ \emph{recursively admissible} if $\mathbf{B}$ has a $0$ and either
		\begin{itemize}
			\item $\mathbf{B}$ is Boolean of rank $1$, or
			\item $\mathbf{B}$ is admissible for some $x\succ 0$ and both $\mathbf{B}_{\geq x}$ and $\mathbf{B}_{\not\geq x}$ are recursively admissible.
		\end{itemize}
	\end{enumerate}
\end{definition}

If we have a poset bundle of sheaves $\xi:\mathbf{B}\to \mathbf{Sh}$ and a subcategory $\mathbf{C}$ of $\mathbf{B}$, we can restrict the bundle $\xi$ to $\mathbf{C}$ to obtain another bundle $\xi_\mathbf{C}:\mathbf{C}\to \mathbf{Sh}$ with total sheaf $(\mathbf{E}_{\xi_\mathbf{C}};F_{\xi_\mathbf{C}})$. When the bundle $\xi$ is clear from context, we will just use $(\mathbf{E}_{\mathbf{C}};F_{\mathbf{C}})$. Note that we use $(\mathbf{E}_x;F_x)$ for the sheaf $\xi(x)$ when $x$ is an object of $\mathbf{B}$, which (almost) coincides with $(\mathbf{E}_\mathbf{C};F_\mathbf{C})$ when $\mathbf{C}$ is the subcategory of $\mathbf{B}$ consisting only of $x$ and its identity arrow.

The next lemma shows how admissibility of $\mathbf{B}$ extends to $\mathbf{E}_\xi$. 

\begin{lemma}\label{gluedminimal}
	Let $\mathbf{B}$ be admissible for some $x\in\mathbf{B}$ and $\xi:\mathbf{B}\to \mathbf{Sh}$ be a poset bundle of sheaves with total sheaf $(\mathbf{E}_\xi;F_\xi)$. Then $\mathbf{E}_\xi$ is admissible for $\mathbf{E}_{\mathbf{B}_{\not\geq x}},\mathbf{E}_{\mathbf{B}_{\geq x}}$.

\end{lemma}
\begin{proof}
	It is immediate that $\mathbf{E}_{\mathbf{B}_{\not\geq x}}$ and $\mathbf{E}_{\mathbf{B}_{\geq x}}$ are disjoint, that $\mathbf{E}_{\mathbf{B}_{\not\geq x}}\cup\mathbf{E}_{\mathbf{B}_{\geq x}}=\mathbf{E}_\xi$, and that there is no arrow from an object of $\mathbf{E}_{\mathbf{B}_{\not\geq x}}$ to an object of $\mathbf{E}_{\mathbf{B}_{\geq x}}$. It remains to show that for all $w\in\mathbf{E}_{\mathbf{B}_{\not\geq x}}$, the subposet $\{z\in \mathbf{E}_{\mathbf{B}_{\geq x}}\mid w\leq z\}$ has a unique minimal element.
	
	Since $w\in \mathbf{E}_{\mathbf{B}_{\not\geq x}}$, $w$ is an element of a particular $\mathbf{E}_y$ for some $y\in \mathbf{B}_{\not\geq x}$. By the admissibility of $\mathbf{B}$, that means that the poset $\mathbf{B}_{\geq x}^{\geq y}$ has a unique minimum, say $v$. Then $y\leq v$ and thus there is an arrow $y\to v$ in $\mathbf{B}$. Denote the sheaf morphism given by this arrow as $\gamma$. By the construction of the total sheaf, we have that $w\leq \gamma_1(w)$. 
	
	Suppose $w\leq z$ for some $z\in \mathbf{E}_{\mathbf{B}_{\geq x}}$ and suppose $z\in \mathbf{E}_u$, $u\in\mathbf{B}_{\geq x}$. Then by our argument in Proposition \ref{Composition arrows} we have a $z_0\in\mathbf{E}_u$ with $w\leq z_0\leq z$ and an arrow $y\to u$ giving rise to a sheaf morphism $\gamma'$. Thus $u$ is in $\mathbf{B}_{\geq x}^{\geq y}$, not just $\mathbf{B}_{\geq x}$. Since $v$ is the minimal element of $\mathbf{B}_{\geq x}^{\geq y}$, we have that $v\leq u$. But there is a unique arrow $y\to u$, so $\gamma'_1$ factors through $\mathbf{E}_v$ and the sheaf morphism given by $v\to u$ maps $\gamma_1(w)$ to $z_0$. This means that $\gamma_1(w)\leq z_0\leq z$, therefore $\gamma_1(w)$ is the minimum of the set $\{z\in \mathbf{E}_{\mathbf{B}_{\geq x}}\mid w\leq z\}$. Refer to Figure \ref{MoreIntersection} for the relevant objects.\qedhere
	\begin{figure}[h]
		\[\begin{tikzpicture}[scale=1]
			
			\node (1) at (0,0) {\includegraphics[scale=.06]{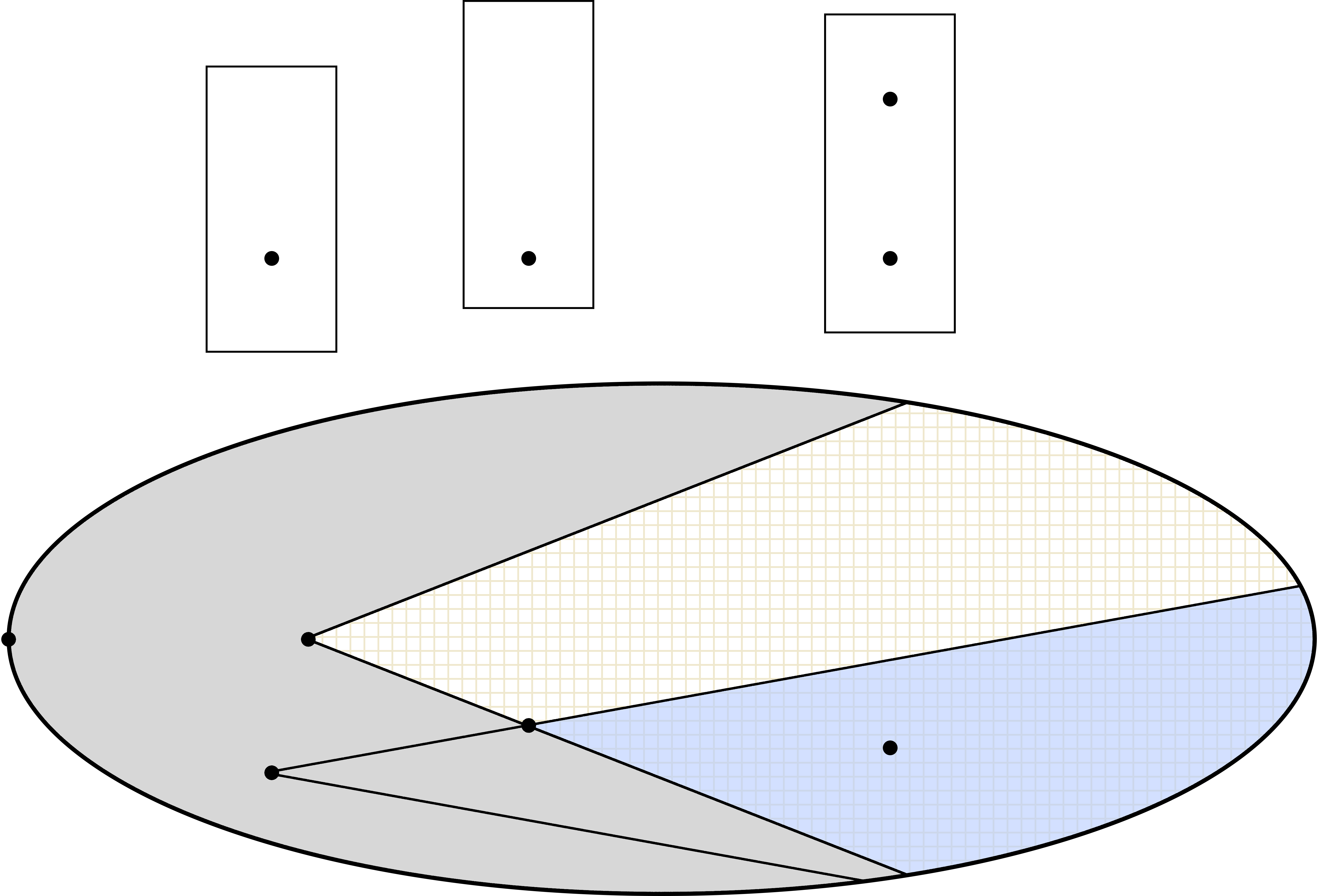}};
			\node(2) at (-57/10,-15.9/10){$0$};
			\node(3) at (-29.2/10,-12/10){$x$};
			\node(4) at (-35/10,-27.2/10){$y$};
			\node(5) at (-13/10,-20/10){$v$};
			\node(6) at (22/10,-25/10){$u$};
			\node(7) at (-32.3/10,13/10){$\scriptstyle w$};
			\node(8) at (-32.3/10,35/10){$\mathbf{E}_y$};
			\node(9) at (-10.5/10,19/10){$\scriptstyle\gamma_1(w)$};
			\node(10) at (19.2/10,13/10){$\scriptstyle z_0$};
			\node(11) at (19.2/10,32/10){$\scriptstyle z$};
			\node(12) at (-10.5/10,40/10){$\mathbf{E}_v$};
			\node(13) at (19.2/10,39/10){$\mathbf{E}_u$};
			
			\node(14) at (-31/10,15/10){};
			\node(15) at (18/10,15/10){};

            \node(16) at (4,.3) {$\mathbf{B}$};
            \node(17) at (-4.2,-1) {$\mathbf{B}_{\not\geq x}$};
            \node(18) at (2.7,-.4) {$\mathbf{B}_{\geq x}$};
            \node(19) at (3.4, -2.2) {$\mathbf{B}_{\geq x}^{\geq y}$};
			
			\draw[->,tips=proper]
			(-30/10,-27/10) edge[bend right=10] node[below right=-.07 and .1]{$\scriptstyle \gamma$}(-12/10,-24/10)
			(-8/10,-23.3/10) edge node[below right] {$\scriptstyle \gamma'$}(17.5/10,-24.7/10)
			(-30.5/10,15.9/10) edge node[above] {$\scriptstyle \gamma_1$}(-12.5/10,15.9/10)
			(-8.8/10,15.9/10) edge (17.5/10,15.9/10)
			(19.3/10,17.5/10) edge (19.3/10,27.5/10);
			\draw[->,tips=proper](14) edge[out=350,in=190] node[below right=0 and .5] {$\scriptstyle \gamma'_1$}(15);
			
			\draw[dashed](-32.15/10,-27.2/10) edge (-32.15/10,8.6/10)
			(-10.75/10,-23/10) edge (-10.75/10,12/10)
			(19.3/10,-25/10) edge (19.3/10,9.5/10);
		\end{tikzpicture}\]
		\caption{The poset $\mathbf{B}$ with $y\in\mathbf{B}_{\not\geq x}$ and the fibers over $y$, $v$ and $u$.}
		\label{MoreIntersection}
	\end{figure}
\end{proof}

For constant bundles over certain posets we have a calculation of the cohomology of the total complex. We recall the following facts about morphisms of spectral sequences (see \cite{maclane}). If $E,E'$ are spectral sequences constructed from filtrations (or bicomplexes), then a morphism of filtrations (or of bicomplexes) induces a morphism $E\to E'$. A spectral sequence $E$ is \emph{bounded below} if for each degree $n$ there is an integer $s=s(n)$ such that $E_0^{p,q}=0$ when $p<s$ and $p+q=n$. If $E,E'$ are bounded below spectral sequences and $f:E\to E'$ is a morphism, such that for some $r$ the homomorphisms $f_r^{p,q}:E_r^{p,q}\to E'^{p,q}_r$ are isomorphisms for each $p$ and $q$, then the maps $f_\infty^{p,q}:E_\infty^{p,q}\to E'^{p,q}_\infty$ are also isomorphisms. The above is known as the \emph{mapping lemma}.

\begin{proposition}\label{alpha}
	Suppose $\mathbf{B}$ is a poset, $x\in \mathbf{B}$ is a unique minimum, and $(\mathbf{C},F)$ is an object of $\mathbf{Sh}$. If $\xi=\mathbf{B}\times (\mathbf{C},F)$ is a constant bundle (recall Example \ref{constant}), then there is a chain map $\varphi^\bullet:\mathcal{S}^\bullet(\mathbf{C};F)\to T^\bullet_\xi$ such that the induced map $\varphi^\bullet:H^\bullet(\mathbf{C};F)\to H^\bullet T^\bullet_\xi$ is an isomorphism.
\end{proposition}
\begin{proof}
	It is straightforward to see why $\mathcal{S}^\bullet(\mathbf{C};F)$ is quasi-isomorphic to $T^\bullet_\xi$. The $E_2$ page of the spectral sequence for $\xi$ has
	\[
	E_2^{p,q}=H^p(\mathbf{B},\Delta H^q(\mathbf{C};F)).
	\]
	Since the right-hand side is the cohomology of a constant sheaf, the only non-zero positions on the $E_2$ page are in the column $p=0$; so the sequence collapses and we can read off $H^\bullet T^\bullet_\xi$. Explicitly,
	\[
	H^p(\mathbf{B};\Delta H^q(\mathbf{C};F))=\left\{\begin{array}{c c l}H^q(\mathbf{C};F), && \text{ if }p=0, \\ 0, && \text{ otherwise}.\end{array}\right.
	\]
	So $H^\bullet(\mathbf{C};F)\cong H^\bullet T^\bullet_\xi$. It is still useful to describe the explicit quasi-isomorphism; we will use a version of this explicit chain map in the proof of Proposition \ref{first}.
	
	First consider the constant sheaf $(\mathbf{P},\Delta A)$, where $\mathbf{P}$ is a poset with a unique minimum. Recall that
	\[
	H^n(\mathbf{P};\Delta A)\cong\left\{\begin{array}{c c l}A, && \text{ if }n=0, \\ 0, && \text{ otherwise}.\end{array}\right.
	\]
	
	Our first goal is to find an explicit map for the isomorphism above. So let \hbox{$u\in\mathcal{S}^0(\mathbf{P};\Delta A)$} be such that $du=0$. Since we have a unique minimum $0$, for any $x\in \mathbf{P}$, there is an arrow $0\leq x$ in $\mathbf{P}$. Then $0=du|_{0 \leq x}=u|_x - u|_0$, so $u|_x=u|_0$ for all $x\in \mathbf{P}$. Denote such a constant element of $\mathcal{S}^0(\mathbf{P};\Delta A)$ by $u_a$ if $u_a|_x=a\in A$ for all $x\in \mathbf{P}$. So the isomorphism we are looking for is \hbox{$\theta:A\to H^0(\mathbf{P},\Delta A):a\mapsto u_a$}.
	
	Now consider the (trivial) chain complex $\iota^\bullet(A)$ defined by
	\[
	\iota^n(A)=\left\{\begin{array}{c c l} A, &&\text{ if }n=0, \\ 0, &&\text{ otherwise},\end{array}\right.
	\]
	and $d^n_{\iota^\bullet(A)}=0$ for all $n$. Define the map $\psi^\bullet:\iota^\bullet(A)\to \mathcal{S}^\bullet(\mathbf{P};\Delta A)$ as
	\[
	\psi^n=\left\{\begin{array}{c c l}\theta, &&\text{ if }n=0,\\ 0, &&\text{ otherwise.}\end{array}\right.
	\]
	\[
	\begin{tikzpicture}[scale=2.2,auto=center,tips=proper]
		
		\node (1) at (0,.7) {$\cdots$};
		\node (2) at (1.5,.7)  {$0$};
		\node (3) at (3,.7)  {$A$};
		\node (4) at (4.5,.7) {$0$};
		\node (5) at (6,.7) {$\cdots$};
		\node (6) at (0,0) {$\cdots$};
		\node (7) at (1.5,0) {$0$};
		\node (8) at (3,0) {$\mathcal{S}^0(\mathbf{P};\Delta A)$};
		\node (9) at (4.5,0) {$\mathcal{S}^1(\mathbf{P};\Delta A)$};
		\node (10) at (6,0) {$\cdots$};
		
		\draw[->,tips=proper]
		(1) edge (2)
		(2) edge (3)
		(3) edge (4)
		(4) edge (5)
		(6) edge (7)
		(7) edge (8)
		(8) edge (9)
		(9) edge (10)
		(2) edge node[auto] {$0$} (7)
		(3) edge node[auto] {$\theta$} (8)
		(4) edge node[auto] {$0$} (9);
	\end{tikzpicture}\]
	To see this is a chain map, note that $\theta(a)\in \ker(d^0)$, so $d\theta=0$. All other squares commute since all compositions are the $0$ map.
	
	Crucially, $\psi^\bullet$ is a quasi-isomorphism. This is because $H^0\iota^\bullet(A)=A$ and by construction $\theta$ induces the isomorphism $H^0\iota^\bullet(A)\to H^0\mathcal{S}^\bullet(\mathbf{P};\Delta A)$. Note that the map $-\psi^\bullet$ is also a quasi-isomorphism, since $-\theta$ induces $-\id:A\to A$ in homology.

	Returning to the case of the constant bundle $\xi=\mathbf{B}\times (\mathbf{P},F)$, we can now define \hbox{$\varphi^n:\mathcal{S}^n(\mathbf{C};F)\to T^n_\xi$} by
	\[
	\varphi u|_{\sigma,\tau}=\left\{\begin{array}{c c l}u|_\tau, &&\text{if }\length(\sigma)=0,\\ 0, &&\text{otherwise.}\end{array}\right.
	\]
	A routine check shows that $\varphi^\bullet$ is a chain map.

	We define a bicomplex $\mathcal{L}^{\bullet,\bullet}$ by
	\[
	\mathcal{L}^{p,q}=\left\{\begin{matrix}\mathcal{S}^q(\mathbf{C};F) &\text{ if }p=0,\\0, &\text{ otherwise}\end{matrix}\right.
	\]
	and we let $d^h_{\mathcal{L}}=0$, $d^v_{\mathcal{L}}=0$ on the non-zero columns, and $d^v_{\mathcal{L}}=d_{\mathcal{S}^\bullet(\mathbf{C};F)}$ on the $0$-th column.

	Recall the bicomplex $\mathcal{K}_\xi^{p,q}=\mathcal{S}^p(\mathbf{B};\mathcal{S}^q)$ defined in Section \ref{sec:bicomplex}. We want to show that $\varphi$ induces a morphism of these two bicomplexes. To that effect, we need three facts:
	\begin{enumerate}
		\item First, it is clear that $\varphi(\mathcal{S}^q(\mathbf{C};F))\subseteq \mathcal{S}^0(\mathbf{B};\mathcal{S}^q)$.
		\item Second, we need $\varphi$ to induce a chain map on the vertical complexes. This is the zero map for $p\not=0$. Consider the diagram fragment
		\[\begin{tikzpicture}[scale=1.5,auto=center,tips=proper]
			
			\node (3) at (0,2)  {$\mathcal{S}^q(\mathbf{C};F)$};
			\node (4) at (0,3) {$\mathcal{S}^{q+1}(\mathbf{C};F)$};
			\node (8) at (3,2) {$\mathcal{S}^0(\mathbf{B};\mathcal{S}^q)$};
			\node (9) at (3,3) {$\mathcal{S}^0(\mathbf{B};\mathcal{S}^{q+1})$};
			
			\draw[->,tips=proper]
			(3) edge node[auto] {$d$} (4)
			(8) edge node[auto] {$d^v$} (9)
			(3) edge node[auto] {$\varphi$} (8)
			(4) edge node[auto] {$\varphi$} (9);
		\end{tikzpicture}\]
		We want to show that $d^v\varphi=\varphi d$. Let $u\in \mathcal{S}^0(\mathbf{B};\mathcal{S}^{q+1})$, $x\in \mathbf{B}$, $y_0\leq\cdots\leq y_{q+1}\in \mathbf{C}$. 
		\begin{align*}
			d^v\varphi u|_{x,y_0\leq\cdots\leq y_{q+1}}&=\sum_{i=0}^{q+1}\varphi u|_{x,y_0\leq\cdots\leq \hat{y}_i\leq\cdots\leq y_{q+1}}=\sum_{i=0}^{q+1} u|_{x,y_0\leq\cdots\leq \hat{y}_i\leq\cdots\leq y_{q+1}}\\
			&=du|_{y_0\leq\cdots\leq y_{q+1}}=\varphi du|_{x,y_0\leq\cdots\leq y_{q+1}}.
		\end{align*}
		
		Therefore $\varphi$ induces a chain map on vertical complexes.
		
		\item Finally, we need $\varphi$ to induce chain maps on horizontal complexes. Consider the diagram 
		\[
		\begin{tikzpicture}[scale=2.2,auto=center,tips=proper]
			
			\node (1) at (0,.7) {$\cdots$};
			\node (2) at (1,.7)  {$0$};
			\node (3) at (2,.7)  {$\mathcal{S}^q(\mathbf{C};F)$};
			\node (4) at (3,.7) {$0$};
			\node (5) at (4,.7) {$\cdots$};
			\node (6) at (0,0) {$\cdots$};
			\node (7) at (1,0) {$0$};
			\node (8) at (2,0) {$\mathcal{S}^0(\mathbf{B};\mathcal{S}^q)$};
			\node (9) at (3,0) {$\mathcal{S}^1(\mathbf{B};\mathcal{S}^q)$};
			\node (10) at (4,0) {$\cdots$};
			
			\draw[->,tips=proper]
			(1) edge (2)
			(2) edge (3)
			(3) edge (4)
			(4) edge (5)
			(6) edge (7)
			(7) edge (8)
			(8) edge (9)
			(9) edge (10)
			(2) edge node[auto] {$0$} (7)
			(3) edge node[auto] {$\varphi$} (8)
			(4) edge node[auto] {$0$} (9);
		\end{tikzpicture}\]
		This is just an instance of the map $\psi$ with $A=\mathcal{S}^q(\mathbf{C};F)$.
	\end{enumerate}
	
	Now consider the two spectral sequences $E$ and $E'$ associated to the bicomplexes $\mathcal{L}^{\bullet,\bullet}$ and $\mathcal{K}^{\bullet,\bullet}_\xi$, respectively. The morphism of bicomplexes $\varphi$ induces a morphism $E\to E'$ of spectral sequences. Note also that both $E$ and $E'$ are bounded below. We have $E_1^{p,q}=0$ if $p\not=0$ and $E_1^{0,q}=H^q(\mathbf{C};F)$, while $E'_1{}^{p,q}=\mathcal{S}^p(\mathbf{B};\mathcal{H}^q_{fib})$.

	As in the case of a constant bundle, the induced maps $\varphi$ are quasi-isomorphisms on the horizontal complexes. This means that $\varphi$ induces isomorphisms on the second pages of $E$ and $E'$. By the mapping lemma, we have an induced isomorphism $\varphi:E_\infty^{p,q}\to E'^{p,q}_\infty$.
	
	By the above, the construction of the total complex of a bicomplex, and Proposition \ref{Spectral}, we can conclude that $\varphi$ gives an isomorphism $\varphi:H^\bullet(\mathbf{C};F)\to H^\bullet T^\bullet_\xi$.
	\end{proof}

\section{Long exact sequence in the cohomology of the total complex}\label{sec:long1}
If we have a poset bundle $\xi:\mathbf{B}\to\mathbf{Sh}$ and a subcategory $\mathbf C$  of $\mathbf{B}$, then we will denote the chain complex $T^\bullet_{\xi_\mathbf{C}}$ (recall \S \ref{sec:bicomplex}) by just $T^\bullet_\mathbf{C}$. Below we headline the main result of this section and leave the proof until we have built up the required machinery.

\begin{theorem}\label{LES1}
	Let $\xi:\mathbf{B}\to\mathbf{Sh}$ be a poset bundle of sheaves with $\mathbf{B}$ an admissible poset for $x\succ 0$. Then there is a long exact sequence
	\begin{align*}
		\cdots\to H^{n-1} T^\bullet_{\mathbf{B}_{\not\geq x}}\to H^n T^\bullet_\xi\to H^n T^\bullet_{\mathbf{B}_{\geq x}}\oplus H^n T^\bullet_{\mathbf{B}_{\not\geq x}}\to H^{n} T^\bullet_{\mathbf{B}_{\not\geq x}}\to H^{n+1} T^\bullet_\xi\to \cdots
	\end{align*}
\end{theorem}

We will need to leverage the admissibility condition in the theorem to establish the connection between the total complex of the whole sheaf and those of the two smaller parts $\mathbf{B}_{\geq x}$ and $\mathbf{B}_{\not\geq x}$, determined by the element $x\succ 0$. Recall that we assume all the $\mathbf{E}_y$ are posets.

Where possible, we will use $x$'s to refer to objects in $\mathbf{B}_{\not\geq x}$ and $z$'s to refer to objects of $\mathbf{B}_{\geq x}$. We can write down explicitly what $ T^n_\xi$, $ T^n_{\mathbf{B}_{\geq x}}$, and $ T^n_{\mathbf{B}_{\not\geq x}}$ are:
\begin{align*}
	T^n_\xi=\bigoplus_{p+q=n}\, \prod_{\substack{x_0\leq\cdots\leq x_p\in \mathbf{B}\\y_0\leq\cdots\leq y_q\in \mathbf{E}_{x_0}}} \kern-7mm F_{x_0}(y_0),\quad
	T^n_{\mathbf{B}_{\geq x}}=\bigoplus_{p+q=n}\, \prod_{\substack{z_0\leq\cdots\leq z_p\in \mathbf{B}_{\geq x}\\y_0\leq\cdots\leq y_q\in \mathbf{E}_{z_0}}} \kern-7mm F_{z_0}(y_0),\quad
	T^n_{\mathbf{B}_{\not\geq x}}=\bigoplus_{p+q=n}\, \prod_{\substack{x_0\leq\cdots\leq x_p\in \mathbf{B}_{\not\geq x}\\y_0\leq\cdots\leq y_q\in \mathbf{E}_{x_0}}} \kern-7mm F_{x_0}(y_0).
\end{align*}

Define the quotient map
\[
\rho: T^n_\xi\to T^n_{\mathbf{B}_{\geq x}}\oplus T^n_{\mathbf{B}_{\not\geq x}}
\]
by setting to $0$ any coordinate corresponding to a sequence $x_0\leq\cdots\leq x_p\in \mathbf{B}$ that has objects in both $\mathbf{B}_{\geq x}$ and $\mathbf{B}_{\not\geq x}$. Explicitly, if $u\in T^{p+q}_\xi$, $\sigma=x_0\leq\cdots\leq x_p\in \mathbf{B}_{\geq x}$ or $\mathbf{B}_{\not\geq x}$, and $\tau\in \mathbf{E}_{x_0}$, then $\rho u|_{\sigma,\tau}=u|_{\sigma,\tau}$.

To see that $\rho$ is a chain map, let $\{x_0,\ldots,x_p\}\subseteq \mathbf{B}_{\geq x}$. We have
\begin{align*}
	\rho du|_{\sigma,\tau}=du|_{\sigma,\tau}&=\sum_{i=0}^p (-1)^i u|_{\sigma_i,\tau}+(-1)^{p+q}\sum_{j=0}^q (-1)^j u|_{\sigma,\tau_j}\\
	&=\sum_{i=0}^p (-1)^i \rho u|_{\sigma_i,\tau}+(-1)^{p+q}\sum_{j=0}^q (-1)^j \rho u|_{\sigma,\tau_j}=d\rho u|_{\sigma,\tau}.
\end{align*}
The calculation is analogous if $\{x_0,\ldots,x_p\}\subseteq\mathbf{B}_{\not\geq x}$. Therefore $\rho$ is a chain map. It is also clearly surjective, so we have a short exact sequence
\[
0\to M^\bullet\to  T^\bullet_\xi\to T^\bullet_{\mathbf{B}_{\geq x}}\oplus  T^\bullet_{\mathbf{B}_{\not\geq x}}\to 0
\]
for a particular chain complex $M^\bullet$. We describe $M^\bullet$ explicitly:
\[
M^n=\bigoplus_{p+q=n}\, \prod_{x_0\leq\cdots\leq x_p}\, \prod_{y_0\leq\cdots\leq y_q\in \mathbf{E}_{x_0}} F_{x_0}(y_0),
\]
where $x_0\in \mathbf{B}_{\not\geq x},x_p\in \mathbf{B}_{\geq x}$. We can rewrite $M^\bullet$ to pay attention to how many of the $x_i$'s are in $\mathbf{B}_{\not\geq x}$ and how many are in $\mathbf{B}_{\geq x}$:
\[
M^n=\bigoplus_{s+t+q=n}\, \prod_{x_0\leq\cdots\leq x_s\leq z_0\leq\cdots\leq z_{t-1}}\, \prod_{y_0\leq\cdots\leq y_q\in \mathbf{E}_{x_0}} F_{x_0}(y_0),
\]
where $x_i\in\mathbf{B}_{\not\geq x},z_i\in \mathbf{B}_{\geq x},s\geq 0,t\geq 1$.

\begin{proposition}\label{first}
	Let $\xi:\mathbf{B}\to\mathbf{Sh}$ be a poset bundle of sheaves with $\mathbf{B}$ an admissible poset for $x\succ 0$. If $M^\bullet$ is as above, there is a chain map $\varphi_1:{T}^{n-1}_{\mathbf{B}_{\not\geq x}}\to M^n$ that induces an isomorphism in cohomology.
\end{proposition}

\begin{proof}
	In an attempt to keep the notation less cluttered, write $K^n= T^{n-1}_{\mathbf{B}_{\not\geq x}}$.
	
	We define the chain map $\varphi_1:K^n\to M^n$, which will extend to a morphism of filtered complexes. By showing that $\varphi_1$ induces isomorphisms on the first pages of the two spectral sequences associated to the two filtrations, the Mapping Lemma implies that $\varphi_1$ is a quasi-isomorphism.
	
	Let $\sigma=x_0\leq\cdots\leq x_s\leq z_0\leq\cdots\leq z_{t-1}$ be a sequence in $\mathbf{B}$ with $x_i\in \mathbf{B}_{\not\geq x}$, $z_i\in \mathbf{B}_{\geq x}$, $s\geq 0$, $t\geq 1$. Denote $\sigma'=x_0\leq\cdots\leq x_s$. Also let $\tau=y_0\leq\cdots\leq y_q$ be a sequence in $\mathbf{E}_{x_0}$. Now if $s+t+q=n$, we define $\varphi_1:K^n\to M^n$ by
	\[
	\varphi_1 u|_{\sigma,\tau}=\left\{\begin{array}{c c l}(-1)^q u|_{\sigma',\tau} && \text{if }t=1\\ 0 && \text{otherwise.}\end{array}\right.
	\]
	Intuitively, $\varphi_1$ acts like the map $\varphi$ in Proposition \ref{alpha} on the portion of $M^\bullet$ that matches $ T^\bullet_{\mathbf{B}_{\not\geq x}}$.  A routine check shows that $\varphi_1$ is a chain map.
	
	Now we define filtrations of $M^\bullet$ and $K^\bullet$:
	\begin{align*}
		\mathcal F^pM^n=\{u\in M^n : u|_{\sigma,\tau}\not=0\Rightarrow \sigma=x_0\leq\cdots\leq x_s&\leq z_0\leq\cdots\leq z_{t-1}\text{ with }s\geq p\},
	\end{align*}
	\[
	\mathcal{J}^pK^n=\{u\in K^n : u|_{\sigma,\tau}\not=0\Rightarrow\sigma=x_0\leq\cdots\leq x_s\text{ with }s\geq p\}.
	\]
	
	We want to use the Mapping Lemma for these two filtrations, so the next step is establishing all the assumptions of the lemma. We prove them for $\mathcal F$ with the arguments for $\mathcal J$ being analogous.
	\begin{description}
		\item[\rm ($\mathcal F$ is a filtration)] It is clear from the definition of $\mathcal F$ that $\mathcal F^{p+1}M^n\subseteq \mathcal F^{p}M^n$ for each $p$ and $n$. Remains to show that $\mathcal F^pM^\bullet$ is a complex for each $p$. Let $\sigma=x_0\leq\cdots\leq x_s\leq z_0\leq\cdots\leq z_{t-1}$ with $s<p$ and $u\in \mathcal F^pM^n$. Then for any sequence $\tau \in \mathbf{E}_{x_0}$ (of appropriate length $q$) we have
		\begin{align*}
			du|_{\sigma,\tau}&=\sum_{i=0}^s(-1)^i u|_{\sigma_i,\tau}+(-1)^{s+1}\sum_{k=0}^{t-1}(-1)^k u|_{\sigma_{s+k},\tau}++(-1)^{s+t+q}\sum_{\ell=0}^q(-1)^\ell u|_{\sigma,\tau_\ell}.
		\end{align*}
		The summands in the first sum correspond to $x$-sequences of length $s-1<p$, while the summands in the other two sums correspond to $x$-sequences of length $s<p$. All those coordinates are $0$ in $u\in \mathcal F^pM^n$, so $d$ induces a differential on $\mathcal F^pM^\bullet$.
		
		\item [\rm ($\mathcal F$ is convergent below)] Observe that $\mathcal F^0M^n=M^n$, since $M^n$ does not have any coordinates corresponding to sequences in $\mathbf{B}$ not containing elements of $\mathbf{B}_{\not\geq x}$.
  
		\item [\rm ($\mathcal F$ is bounded above)] Observe that $\mathcal F^nM^n=0$, since we need $s+t+q=n$ and $t\geq 1$.
  
		\item [\rm ($\varphi_1$ is a morphism of filtrations)] Let $u\in \mathcal{J}^pK^n$, set $\sigma=x_0\leq\cdots\leq x_s\leq z$ and $\tau=y_0\leq\cdots\leq y_q$. First suppose $s+q+1\not=n$. The potentially non-zero coordinates of $\varphi_1 u|_{\sigma,\tau}$ correspond to sequences of combined length satisfying $s+q\not=n-1$, so they are also $0$. Now suppose $s<p$. Again, the potentially non-zero coordinates of $\varphi_1 u|_{\sigma,\tau}$ correspond to $x$-sequences of length $s<p$, so are also $0$. Thus $\varphi_1(\mathcal{J}^pK^n)\subseteq \mathcal F^pM^n$.
		
		To see that $\varphi_1$ induces chain maps $\mathcal{J}^pK^\bullet\to \mathcal F^pM^\bullet$ for every $p$, note that we already know that $d\varphi_1=\varphi_1 d$ and that $\varphi_1(\mathcal{J}^pK^n)\subseteq \mathcal F^pM^n$.
	\end{description}
	
	Let $E, E'$ be the spectral sequences associated to the filtrations $\mathcal F,\mathcal J$, respectively. We have
	\begin{align*}
		E_0^{p,q}&=\frac{\mathcal F^pM^{p+q}}{\mathcal F^{p+1}M^{p+q}}=\{u\in M^{p+q} : u|_{\sigma,\tau}\not=0\Rightarrow \sigma=x_0\leq\cdots\leq x_p\leq z_0\leq\cdots\leq z_{t-1}\},\\
		E_0'^{p,q}&=\frac{\mathcal{J}^pK^{p+q}}{\mathcal{J}^{p+1}K^{p+q}}=\{u\in K^{p+q} : u|_{\sigma,\tau}\not=0\Rightarrow\sigma=x_0\leq\cdots\leq x_p\}.
	\end{align*}
	
	The vertical differentials in $E_0$ are given by
	\begin{align*}
		du|_{x_0\leq\cdots\leq x_p\leq z_0\leq\cdots\leq z_{t-1},y_0\leq\cdots\leq y_{q-t}}&=(-1)^{p+1}\sum_{i=0}^{t-1}(-1)^i u|_{x_0\leq\cdots\leq x_p\leq z_0\leq\cdots\leq \hat{z}_i\leq\cdots\leq z_{t-1},y_0\leq\cdots\leq y_{q-t}}+\\
		&\quad+(-1)^{p+q}\sum_{\ell=0}^{q-t}(-1)^\ell u|_{x_0\leq\cdots\leq x_p\leq z_0\leq\cdots\leq z_{t-1},y_0\leq\cdots\leq \hat{y}_\ell\leq\cdots\leq y_{q-t}}
	\end{align*}
	and the vertical differentials in $E'_0$ are given by
	\begin{align*}
		du|_{x_0\leq\cdots\leq x_p,y_0\leq\cdots\leq y_{q}}=(-1)^{p+q}\sum_{\ell=0}^{q}(-1)^\ell u|_{x_0\leq\cdots\leq x_p,y_0\leq\cdots\leq \hat{y}_\ell\leq\cdots\leq y_{q}}.
	\end{align*}
	
	Using the notation from Definition \ref{admissible} we can thus rewrite
	\[
	E_0^{p,\bullet}=\prod_{x_0\leq\cdots\leq x_p}(-1)^{p+1} T^{\bullet-1}_{\mathbf{B}_{\geq x}^{\geq x_p}\times (\mathbf{E}_{x_0},F_{x_0})}\quad\text{and}\quad
	E'_0{}^{p,\bullet}=\prod_{x_0\leq\cdots\leq x_p}(-1)^{p+q}\mathcal{S}^{\bullet-1}(\mathbf{E}_{x_0};F_{x_0}).
	\]
	
	Now note that $\varphi_1$ acts as the product over all $p$-long $x$-sequences in $\mathbf{B}_{\not\geq x}$ of the maps in Proposition \ref{alpha}, since $\mathbf{B}$ is an admissible poset and thus the subposet $\mathbf{B}_{\geq x}^{\geq x_p}$ has a unique minimum. This means that $\varphi_1:E'_0{}^{p,\bullet}\to E_0^{p,\bullet}$ is a quasi-isomorphism and thus
	\[
	E'_1{}^{p,q}=H^p(E'_0{}^{p,\bullet})\overset{\varphi^\bullet_1}{\cong}H^p(E_0^{p,\bullet})=E_1^{p,q}.
	\]
	
	The Mapping Lemma then implies that
	\[
	\varphi^\bullet_1:H^{n-1} T^{\bullet}_{\mathbf{B}_{\not\geq x}}\cong H^n(M^\bullet).\qedhere
	\]
\end{proof}

We can now easily complete the proof of the theorem, headlined at the start of this section.

\begin{proof}[Proof of Theorem \ref{LES1}]
	We have the short exact sequence from before 
	\[
	0\to M^\bullet\to  T^\bullet_\xi\to T^n_{\mathbf{B}_{\geq x}}\oplus T^n_{\mathbf{B}_{\not\geq x}}\to 0,
	\]
	from which we get a long exact sequence in homology
	\begin{align*}
		\cdots\to H^{n-1} T^{\bullet}_{\mathbf{B}_{\geq x}}\oplus H^{n-1} T^\bullet_{\mathbf{B}_{\not\geq x}}\to H^nM^\bullet\to
		H^n T^\bullet_\xi\to H^n T^\bullet_{\mathbf{B}_{\geq x}}&\oplus H^n T^\bullet_{\mathbf{B}_{\not\geq x}}\to\\
		&\to H^{n+1}M^\bullet\to\cdots
	\end{align*}
	Replacing the occurrences of $H^n M^\bullet$ with $H^{n-1} T^{\bullet}_{\mathbf{B}_{\not\geq x}}$ and the maps around those occurrences with the appropriate compositions with $\varphi^\bullet_1$ and $\varphi^\bullet_1{}^{-1}$ gives the required long exact sequence.
\end{proof}

\section{Long exact sequence in sheaf cohomology}\label{sec:long2}
We now repeat this procedure for the cochain complex of the total sheaf $(\mathbf{E}_\xi,F_\xi)$. The story is fairly similar to that of the previous section, so we are a little briefer. Again, we headline the main result, with the proof delayed until the end of the section.
\begin{theorem}\label{LES2}
	Let $\xi:\mathbf{B}\to\mathbf{Sh}$ be a poset bundle of sheaves with $\mathbf{B}$ an admissible poset. Then there is a long exact sequence
	\begin{align*}
		\cdots\to H^{n-1}(\mathbf{E}_{\mathbf{B}_{\not\geq x}};&F_{\mathbf{B}_{\not\geq x}})\to H^n (\mathbf{E}_\xi;F_\xi)\to H^n(\mathbf{E}_{\mathbf{B}_{\geq x}};F_{\mathbf{B}_{\geq x}})\oplus H^n (\mathbf{E}_{\mathbf{B}_{\not\geq x}};F_{\mathbf{B}_{\not\geq x}})\to \cdots
	\end{align*}
\end{theorem}

Where possible, we will use $x$'s to refer to objects in $\mathbf{E}_{\mathbf{B}_{\not\geq x}}$ and $z$'s to refer to objects of $\mathbf{E}_{\mathbf{B}_{\geq x}}$. We can write down explicitly what $\mathcal{S}^n(\mathbf{E}_\xi;F_\xi)$, $\mathcal{S}^n(\mathbf{E}_{\mathbf{B}_{\geq x}};F_{\mathbf{B}_{\geq x}})$, and $\mathcal{S}^n(\mathbf{E}_{\mathbf{B}_{\not\geq x}};F_{\mathbf{B}_{\not\geq x}})$ are:
\begin{align*}
	\mathcal{S}^n(\mathbf{E}_\xi;F_\xi)=\kern-5mm\prod_{x_0\leq\cdots\leq x_n\in \mathbf{E_\xi}} \kern-5mm F_\xi(x_0),\quad
	\mathcal{S}^n(\mathbf{E}_{\mathbf{B}_{\geq x}};F_{\mathbf{B}_{\geq x}})= \kern-7mm \prod_{z_0\leq\cdots\leq z_n\in \mathbf{E}_{\mathbf{B}_{\geq x}}}  \kern-7mm F_\xi(x_0),\quad
	\mathcal{S}^n(\mathbf{E}_{\mathbf{B}_{\not\geq x}};F_{\mathbf{B}_{\not\geq x}})= \kern-7mm \prod_{x_0\leq\cdots\leq x_n\in \mathbf{E}_{\mathbf{B}_{\not\geq x}}} \kern-7mm  F_\xi(x_0).
\end{align*}

Define another quotient map
\[
\rho:\mathcal{S}^n(\mathbf{E}_\xi;F_\xi)\to \mathcal{S}^n(\mathbf{E}_{\mathbf{B}_{\geq x}};F_{\mathbf{B}_{\geq x}})\oplus \mathcal{S}^n(\mathbf{E}_{\mathbf{B}_{\not\geq x}};F_{\mathbf{B}_{\not\geq x}})
\]
by setting to $0$ any coordinate corresponding to a sequence $x_0\leq\cdots\leq x_n$ in $\mathbf{E}_\xi$ that has objects in both $\mathbf{E}_{\mathbf{B}_{\geq x}}$ and $\mathbf{E}_{\mathbf{B}_{\not\geq x}}$. This is a chain map by an analogous argument to the one for the quotient before Proposition \ref{first}.

The map $\rho$ is clearly surjective, so we have an short exact sequence
\[
0\to N^\bullet\to \mathcal{S}^\bullet(\mathbf{E}_\xi;F_\xi)\to \mathcal{S}^n(\mathbf{E}_{\mathbf{B}_{\geq x}};F_{\mathbf{B}_{\geq x}})\oplus \mathcal{S}^n(\mathbf{E}_{\mathbf{B}_{\not\geq x}};F_{\mathbf{B}_{\not\geq x}})\to 0
\]
for a particular chain complex $N^\bullet$.

We describe $N^\bullet$ explicitly:
\[
N^n=\prod_{x_0\leq\cdots\leq x_p\leq z_0\leq\cdots\leq z_{n-p-1}}F_\xi(x_0),
\]
where $x_i\in \mathbf{E}_{\mathbf{B}_{\not\geq x}},z_i\in \mathbf{E}_{\mathbf{B}_{\geq x}},p\geq 0,n-p\geq1$.

\begin{proposition}\label{second}
	Let $\xi:\mathbf{B}\to\mathbf{Sh}$ be a poset bundle of sheaves with $\mathbf{B}$ an admissible poset for $x\succ0$. If $N^\bullet$ is as above, there is a chain map
	\[
	\varphi_2:\mathcal{S}^{n-1}(\mathbf{E}_{\mathbf{B}_{\not\geq x}};F_{\mathbf{B}_{\not\geq x}})\to N^n
	\]
	that induces an isomorphism in cohomology.
\end{proposition}

\begin{proof}
	We define a filtration $\mathcal J$ of $N^\bullet$:
	\begin{align*}
		\mathcal J^pN^n=\{u\in N^n : u|_\sigma\not=0\Rightarrow \sigma=x_0\leq\cdots\leq x_s&\leq z_0\leq\cdots\leq z_{n-s-1},\text{ with }s\geq p\}.
	\end{align*}
	The proof that this is a filtration is analogous to the proofs of the filtrations from  Proposition \ref{first}.
	
	Let $E$ be the spectral sequence associated to the filtration $\mathcal J$ of $N$. We have
	\begin{align*}
		E_0^{p+q}&=\frac{\mathcal J^pN^{p+q}}{\mathcal J^{p+1}N^{p+q}}=\{u\in B^n : u|_\sigma\not=0\Rightarrow \sigma=x_0\leq\cdots\leq x_p\leq z_0\leq\cdots\leq z_{q-1}\}.
	\end{align*}
	
	\noindent The vertical differentials in $E_0$ are given by
	\[
	du|_{x_0\leq\cdots\leq x_p\leq z_0\leq\cdots\leq z_{q-1}}=(-1)^{p+1}\sum_{i=0}^{q-1} (-1)^i u|_{x_0\leq\cdots\leq x_p\leq z_0\leq\cdots\leq \hat z_{i}\leq\cdots\leq z_{q-1}}.
	\]
	
	\noindent We can thus write
	\[
	E_0^{p,\bullet}=\prod_{x_0\leq\cdots\leq x_p} (-1)^{p+1}\mathcal{S}^{\bullet-1}(\{z\in \mathbf{E}_{\mathbf{B}_{\geq x}}\mid z\geq x_p\},\Delta F_\xi(x_0)).
	\]
	
	But the $\mathcal{S}$ complex on the right is of a poset with a constant sheaf. By Lemma \ref{gluedminimal} the underlying poset has a unique minimum, so
	\begin{align*}
		E_1^{p,q}=H^q E_0^{p,\bullet}&=\left\{\begin{array}{c c l}\underset{x_0\leq\cdots\leq x_p}{\prod} (-1)^{p+1} F_\xi(x_0)&&\text{if }q=1,\\0&&\text{otherwise.}\end{array}\right.\\
		&=\left\{\begin{array}{c c l}(-1)^n \mathcal{S}^{n-1}(\mathbf{E}_{\mathbf{B}_{\not\geq x}};F_{\mathbf{B}_{\not\geq x}})&&\text{if }q=1,\\0&&\text{otherwise.}\end{array}\right.
	\end{align*}
	
	\noindent So on the $E_1$ page we have the  single  $q=1$ row
	\[
	\cdots\rightarrow (-1)^n\mathcal{S}^{n-1}(\mathbf{E}_{\mathbf{B}_{\not\geq x}};F_{\mathbf{B}_{\not\geq x}})\rightarrow (-1)^{n+1}\mathcal{S}^n(\mathbf{E}_{\mathbf{B}_{\not\geq x}};F_{\mathbf{B}_{\not\geq x}})\rightarrow\cdots.
	\]
	
	The differential on this page is induced by the differential
	\[
	du|_{x_0\leq\cdots\leq x_p\leq z_0\leq\cdots\leq z_{q-1}}=\sum_{i=0}^p (-1)^i u|_{x_0\leq\cdots\leq \hat x_i\leq\cdots\leq x_p\leq z_0\leq\cdots\leq z_{q-1}},
	\]
	which, since it keeps the $z$-sequence constant, induces the following differential on the above row on the $E_1$ page:
	\[
	du|_{x_0\leq\cdots\leq x_p}=\sum_{i=0}^p (-1)^i u|_{x_0\leq\cdots\leq \hat x_i\leq\cdots\leq x_p}.
	\]
	
	\noindent Since $d (-d)=(-d) d=0$, $\ker(-d)=\ker d$, and $\im(-d)=\im d$, we have that the $E_2$ page is
	\[
	E_2^{p,q}\cong\left\{\begin{array}{c c l}H^{p+q-1}\mathcal{S}^\bullet(\mathbf{E}_{\mathbf{B}_{\not\geq x}};F_{\mathbf{B}_{\not\geq x}})&&\text{if }q=1,\\0&&\text{otherwise.}\end{array}\right.
	\]
	
	Then $E_2^{p,q}\cong E_\infty^{p,q}$ and so
	\[
	E\Rightarrow H^{n-1}\mathcal{S}^\bullet(\mathbf{E}_{\mathbf{B}_{\not\geq x}};F_{\mathbf{B}_{\not\geq x}})\cong N^n.
	\]
	In particular, this isomorphism is witnessed by a similar quasi-isomorphism to that in Proposition \ref{first}, namely $\varphi_2:\mathcal{S}^{n-1}(\mathbf{E}_{\mathbf{B}_{\not\geq x}};F_{\mathbf{B}_{\not\geq x}})\to N^n$ defined by
	\[
	\varphi_2u|_{x_0\leq\cdots\leq x_n}=\left\{\begin{array}{c c l}u|_{x_0\leq\cdots\leq x_{n-1}}&&\text{if }x_{n-1}\in  \mathbf{E}_{\mathbf{B}_{\not\geq x}},x_n\in \mathbf{E}_{\mathbf{B}_{\geq x}},\\ 0&&\text{otherwise.}\end{array}\right.\qedhere
	\]
\end{proof}

We can now, again, easily prove the headlined theorem.

\begin{proof}[Proof of Theorem \ref{LES2}]
	We have the short exact sequence from before
	\[
	0\to N^\bullet\to \mathcal{S}^\bullet(\mathbf{E}_\xi;F_\xi)\to\mathcal{S}^\bullet(\mathbf{E}_{\mathbf{B}_{\not\geq x}};F_{\mathbf{B}_{\not\geq x}})\oplus\mathcal{S}^\bullet(\mathbf{E}_{\mathbf{B}_{\geq x}};F_{\mathbf{B}_{\geq x}})\to 0,
	\]
	from which we get a long exact sequence in homology
	\begin{align*}
		\cdots\to H^{n-1}(\mathbf{E}_{\mathbf{B}_{\geq x}};F_{\mathbf{B}_{\geq x}})&\oplus H^{n-1}(\mathbf{E}_{\mathbf{B}_{\not\geq x}};F_{\mathbf{B}_{\not\geq x}})\to H^nN^\bullet\to H^n(\mathbf{E}_\xi;F_\xi)\to\\
		&\to H^n(\mathbf{E}_{\mathbf{B}_{\geq x}};F_{\mathbf{B}_{\geq x}})\oplus H^n(\mathbf{E}_{\mathbf{B}_{\not\geq x}};F_{\mathbf{B}_{\not\geq x}})\to H^{n+1}N^\bullet\to\cdots
	\end{align*}
	Replacing the occurrences of $H^n N^\bullet$ with $H^{n-1}(\mathbf{E}_{\mathbf{B}_{\not\geq x}};F_{\mathbf{B}_{\not\geq x}})$ and the maps around those occurrences with the appropriate compositions with $\varphi^\bullet_2$ and $\varphi^\bullet_2{}^{-1}$ gives the required long exact sequence.
\end{proof}
\section{The bicomplex and the total sheaf}\label{sec:main}

We have all the necessary prerequisites to prove the main theorem:

\begin{theorem}\label{MainTheorem}
	Let $\xi:\mathbf{B}\to \mathbf{Sh}$ be a poset bundle of sheaves with $\mathbf{B}$ a recursively admissible finite poset, and $(\mathbf{E}_\xi;F_\xi)$ the associated total sheaf. Then there is a spectral sequence
	\[
	E_2^{p,q}=H^p(\mathbf{B};\mathcal{H}^q_{fib}(\xi))\Rightarrow H^\bullet(\mathbf{E}_\xi;F_\xi).
	\]
\end{theorem}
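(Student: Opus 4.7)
The plan is to combine Proposition \ref{Spectral} with a quasi-isomorphism $\varphi^*\colon H^*(E,F)\cong H^*\mathcal{T}^*(E,F)$. Since Proposition \ref{Spectral} already supplies a spectral sequence with the desired $E_2$ page abutting to $H^*\mathcal{T}^*(E,F)$, the main theorem follows as soon as we know that the chain map $\varphi\colon\mathcal{S}^*(E,F)\to\mathcal{T}^*(E,F)$ of \S\ref{sec:traversals} is a quasi-isomorphism whenever the base $B$ is recursively admissible finite.

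I would prove this by induction on the recursive admissible structure. For the inductive step, fix $x\succ 0$ witnessing admissibility. The first observation is that $\varphi$ respects the short exact sequences built before Propositions \ref{first} and \ref{second}: every traversal of a $(\sigma,\tau)$-grid with $\sigma$ entirely in $B(x)$ (respectively $\overline{B}(x)$) stays inside $E_{B(x)}$ (respectively $E_{\overline{B}(x)}$), so $\varphi$ sends $M^*$ into $D^*$ and commutes with the quotient maps onto $\mathcal{S}^*(E_{B(x)},F)\oplus\mathcal{S}^*(E_{\overline{B}(x)},F)$ and $\mathcal{T}^*(E_{B(x)},F)\oplus\mathcal{T}^*(E_{\overline{B}(x)},F)$. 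This gives a morphism between the long exact sequences of Theorems \ref{LES1} and \ref{LES2}, and the inductive hypothesis guarantees that $\varphi^*$ is already an isomorphism at every entry of these sequences built from $B(x)$ and $\overline{B}(x)$.

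To close the Five Lemma, I would verify the chain-level identity $\varphi\circ\alpha_2=\alpha_1\circ\varphi$, which shows that the replacements via $\alpha_1^*,\alpha_2^*$ used in the two long exact sequences are compatible with $\varphi^*$. The computation is local: for $v\in\mathcal{S}^{n-1}(E_{\overline{B}(x)},F)$ and $\sigma=x_0\leq\cdots\leq x_{p-1}\leq z_0$ (with $x_i\in\overline{B}(x)$ and $z_0\in B(x)$), the only traversals of the $(\sigma,\tau)$-grid that can contribute to $\varphi(\alpha_2 v)|_{\sigma,\tau}$ are those ending with the type b) arrow $y_{p-1,q}\to y_{p,q}$; these are in bijection with traversals of the $(\sigma'',\tau)$-grid for $\sigma''=x_0\leq\cdots\leq x_{p-1}$, and after accounting for the extra contribution to $m$ from the added horizontal arrow the resulting sign matches the $(-1)^q$ defining $\alpha_1$. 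With this identity established, the Five Lemma completes the inductive step.

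The main obstacle is the base case. The recursion formally stops at Boolean posets of rank $1$, but the inductive machinery above applied at $x=1$ decomposes such $B$ into single-point subposets, so the argument ultimately requires $\varphi^*$ to be an isomorphism over single-point bases. For $B=\{*\}$ the bundle is vacuously constant in the sense of Example \ref{constant}, Proposition \ref{alpha} furnishes a quasi-isomorphism $\alpha\colon\mathcal{S}^*(E,F)\to\mathcal{T}^*(E,F)$, and the spectral sequence of Proposition \ref{Spectral} collapses at $E_2$ because $H^p(\{*\},\mathcal{H}^q_{fib})=0$ for $p>0$. I would close the gap either by constructing an explicit chain homotopy $\varphi\simeq\alpha$ over a single-point base (exploiting that all horizontal arrows in the grid are identities there) or by tracing through the collapsed spectral sequence to identify $\varphi^*$ with the tautological edge map arising from the $p=0$ column.
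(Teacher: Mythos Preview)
Your proposal is correct and follows the paper's own argument almost exactly: reduce via Proposition~\ref{Spectral} to showing that $\varphi$ is a quasi-isomorphism, establish the compatibility square $\varphi'\circ\alpha_2=\alpha_1\circ\varphi$ (this is precisely the Claim in the paper's proof, and your sign analysis $m(z')=m(z)+q$ is the same one carried out there), and then run the Five Lemma on the morphism between the two long exact sequences by induction on~$|B|$.

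The only place you work harder than necessary is the base case. Over a one-point base $B=\{*\}$ the grid of any $(\sigma,\tau)$ with $\sigma$ of length~$0$ is a single column; it has a unique traversal $z=\tau$ with $m(z)=0$, so $\varphi u|_{*,\tau}=(-1)^{\iota(q)}u|_\tau$. Since moreover $\mathcal{T}^n(E,F)=\mathcal{S}^0(B,\mathcal{S}^n)=\mathcal{S}^n(E,F)$ in this situation, $\varphi$ is literally $(-1)^{\iota(q)}\id$ and hence a quasi-isomorphism on the nose. There is no need to invoke Proposition~\ref{alpha}, build a homotopy $\varphi\simeq\alpha$, or chase edge maps through a collapsed spectral sequence.
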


\begin{proof}
	Proposition \ref{Spectral} gives us
	\[
	E_2^{p,q}=H^p(\mathbf{B};\mathcal{H}^q_{fib}(\xi))\Rightarrow H^\bullet T^\bullet_\xi,
	\]
	so it is enough to show that $H^\bullet T^\bullet_\xi\cong H^\bullet(\mathbf{E}_\xi,F_\xi)$. We will do this by induction on the size of $\mathbf{B}$. Recall the chain map $\omega:\mathcal{S}^\bullet(\mathbf{E}_\xi;F_\xi)\to {T}^\bullet_\xi$ from Section \ref{sec:traversals}:
	\[
	\omega u|_{\sigma,\tau}=(-1)^{\varsigma(q)}\sum_z (-1)^{m(z)}u|_z,
	\]
	where the sum is taken over all traversals $z$ of the grid of $(\sigma,\tau)$. We have two short exact sequences from Theorems \ref{first} and \ref{second}. The map $\omega$ gives a morphism of these short exact sequences
	\[\begin{tikzpicture}[scale=1,auto=center,tips=proper]
		\node (1) at (2.5,0) {$0$};
		\node (2) at (4,0)  {$M^n$};
		\node (3) at (6,0)  {${T}^n_\xi$};
		\node (4) at (11,0) {${T}^n_{\mathbf{B}_{\not\geq x}}\oplus{T}^n_{\mathbf{B}_{\geq x}}$};
		\node (5) at (15,0) {$0$};
		\node (6) at (2.5,2) {$0$};
		\node (7) at (4,2) {$N^n$};
		\node (8) at (6,2) {$\mathcal{S}^n(\mathbf{E}_\xi;F_\xi)$};
		\node (9) at (11,2) {$\mathcal{S}^n(\mathbf{E}_{\mathbf{B}_{\not\geq x}};F_{\mathbf{B}_{\not\geq x}})\oplus\mathcal{S}^n(\mathbf{E}_{\mathbf{B}_{\geq x}};F_{\mathbf{B}_{\geq x}})$};
		\node (10) at (15,2) {$0$};
		
		\draw[->,tips=proper]
		(1) edge (2)
		(2) edge node[auto] {$\varepsilon$} (3)
		(3) edge node[auto] {$\pi$} (4)
		(4) edge (5)
		(6) edge (7)
		(7) edge node[auto] {$\varepsilon$} (8)
		(8) edge node[auto] {$\pi$} (9)
		(9) edge (10)
		(7) edge node[auto] {$\omega'$} (2)
		(8) edge node[auto] {$\omega$} (3)
		(9) edge node[auto] {$\omega\oplus\omega$} (4);
	\end{tikzpicture}
	\]
	where the maps $\varepsilon$ are the injections and the maps $\pi$ the projections of the respective modules. The map $\omega'$ is the restriction of $\omega$ to the subcomplexes $N^n$ and $M^n$. We need to check the commutativity of the two squares.
	\begin{description}
		\item [\rm (Left square).] The maps $\varepsilon$ are just injections, so we have
		\begin{align*}
			\varepsilon\omega u|_{\sigma,\tau}&=\omega u|_{\sigma,\tau}=(-1)^{\varsigma(q)}\sum_z (-1)^{m(z)}u|_z=(-1)^{\varsigma(q)}\sum_z (-1)^{m(z)}\varepsilon u|_z=\omega\varepsilon u|_{\sigma,\tau}.
		\end{align*}
		\item [\rm (Right square).] Similarly, the maps $\pi$ are projections, so
		\begin{align*}
			\pi\omega u|_{\sigma,\tau}&=\omega u|_{\sigma,\tau}=(-1)^{\varsigma(q)}\sum_z (-1)^{m(z)}u|_z=(-1)^{\varsigma(q)}\sum_z (-1)^{m(z)}\pi u|_z=\omega\pi u|_{\sigma,\tau}.
		\end{align*}
	\end{description}
	
	The naturality of the homology functor then gives a morphism of long exact sequences, which contains the  commutative diagram in Figure \ref{CommutativeDiagram1}.
	
	\begin{figure}{\begin{tikzpicture}[scale=1,auto=center,tips=proper]
				\node (1) at (8,0) {$H^{n-1}{T}^\bullet_{\mathbf{B}_{\not\geq x}}\oplus H^{n-1}{T}^\bullet_{\mathbf{B}_{\geq x}}$};
				\node (2) at (8,2)  {$H^nM^\bullet$};
				\node (3) at (8,4)  {$H^n {T}^\bullet_\xi$};
				\node (4) at (8,6) {$H^n{T}^\bullet_{\mathbf{B}_{\not\geq x}}\oplus H^n{T}^\bullet_{\mathbf{B}_{\geq x}}$};
				\node (5) at (8,8) {$H^{n+1}M^\bullet$};
				\node (6) at (0,0) {$H^{n-1}(\mathbf{E}_{\mathbf{B}_{\not\geq x}};F_{\mathbf{B}_{\not\geq x}})\oplus H^{n-1}(\mathbf{E}_{\mathbf{B}_{\geq x}};F_{\mathbf{B}_{\geq x}})$};
				\node (7) at (0,2) {$H^nN^\bullet$};
				\node (8) at (0,4) {$H^n(\mathbf{E}_\xi;F_\xi)$};
				\node (9) at (0,6) {$H^n(\mathbf{E}_{\mathbf{B}_{\not\geq x}};F_{\mathbf{B}_{\not\geq x}})\oplus H^n(\mathbf{E}_{\mathbf{B}_{\geq x}};F_{\mathbf{B}_{\geq x}})$};
				\node (10) at (0,8) {$H^{n+1}N^\bullet$};
				
				\draw[->,tips=proper]
				(1) edge node[auto] {$\delta$} (2)
				(2) edge node[auto] {$\varepsilon^\bullet$} (3)
				(3) edge node[auto] {$\pi^\bullet$} (4)
				(4) edge node[auto] {$\delta$} (5)
				(6) edge node[auto] {$\delta$} (7)
				(7) edge node[auto] {$\varepsilon^\bullet$} (8)
				(8) edge node[auto] {$\pi^\bullet$} (9)
				(9) edge node[auto] {$\delta$} (10)
				(7) edge node[auto] {$\omega'{}^\bullet$} (2)
				(8) edge node[auto] {$\omega^\bullet$} (3)
				(9) edge node[auto] {$\omega^\bullet\oplus\omega^\bullet$} (4)
				(6) edge node[auto] {$\omega^\bullet\oplus\omega^\bullet$} (1)
				(10) edge node[auto] {$\omega'{}^\bullet$} (5);
		\end{tikzpicture}}
		\caption{A portion of the commutative diagram given by the morphism of short exact sequences.}
		\label{CommutativeDiagram1}
	\end{figure}
	
	Recall from  Propositions \ref{first} and \ref{second} the quasi-isomorphisms 
	\[
	\varphi_1:{T}^{n-1}_{\mathbf{B}_{\not\geq x}}\to M^n\,\text{ and }\,\varphi_2:\mathcal{S}^{n-1}(\mathbf{E}_{\mathbf{B}_{\not\geq x}};F_{\mathbf{B}_{\not\geq x}})\to M^n.
	\]
	
	\begin{claim}The following diagram commutes
		\[
		\begin{tikzpicture}[scale=1.4,auto=center,tips=proper]
			every edge/.style = {draw, -Latex}
			
			\node (1) at (0,0) {${T}^{n-1}_{\mathbf{B}_{\not\geq x}}$};
			\node (2) at (0,1.5)  {$\mathcal{S}^{n-1}(\mathbf{E}_{\mathbf{B}_{\not\geq x}};F_{\mathbf{B}_{\not\geq x}})$};
			\node (3) at (3.5,1.5)  {$N^n$};
			\node (4) at (3.5,0) {$M^n$};
			
			\draw[->,tips=proper]
			(2) edge node[auto] {$\omega$} (1)
			(2) edge node[above] {$\varphi_2$} (3);
			
			\draw[->,tips=proper]
			(3) edge node[auto] {$\omega'$} (4)
			(1) edge node[above] {$\varphi_1$} (4);
		\end{tikzpicture}\]
	\end{claim}
	
	\begin{proof}[Proof of claim]
		Let $u\in \mathcal{S}^{n-1}(\mathbf{E}_{\mathbf{B}_{\not\geq x}};F_{\mathbf{B}_{\not\geq x}})$. Suppose 
		\[
		\sigma=x_0\leq\cdots\leq x_s\leq z_0\leq\cdots\leq z_{t-1},\, \tau=y_0\leq\cdots\leq y_q
		\]
		with $s+t+q=n$. If $t> 1$, it is clear that $\varphi_1\omega u|_{\sigma,\tau}=0=\omega'\varphi_2u|_{\sigma,\tau}$, since each summand of $\omega'\varphi_2u|_{\sigma,\tau}$ is $0$ under $\varphi_2$. If $t=1$, let $\sigma'=x_0\leq\cdots\leq x_s$. Then we have
		\begin{align*}
			\omega'\varphi_2 u|_{\sigma,\tau}&=(-1)^{\varsigma(q)}\sum_{z'}(-1)^{m(z')}\varphi_2 u|_{z'},
		\end{align*}
		where the  sum is taken over  the traversals $z'$ of $(\sigma,\tau)$.
		
		Pick a traversal $z'$ of $(\sigma,\tau)$. We zoom in on the top right of the grid of $(\sigma,\tau)$:
		\[\begin{tikzpicture}[scale=1.4,auto=center,tips=proper]
			every edge/.style = {draw, -Latex}
			
			\node (1) at (1,0) {$y'_0$};
			\node (2) at (0,1)  {$y'_1$};
			\node (3) at (1,1)  {$y'_2$};
			\node (4) at (-0.5,1) {$\cdots$};
			\node (5) at (0,0) {\reflectbox{$\ddots$}};
			\node (6) at (1,-0.5) {$\vdots$};
			
			\draw[->,tips=proper]
			(1) edge (3)
			(2) edge (3);
		\end{tikzpicture}\]
		Note that $y'_0,y'_2\in \mathbf{E}_{z_0}$. If $z'$ passes through $y'_0$, then $\varphi_2 u|_{z'}=0$. If $z'$ passes through $y'_1$, then $\varphi_2 u|_{z'}=u|_z$, for a particular traversal $z$ of $(\sigma',\tau)$. Moreover, in this second case there are exactly $q$ many squares in the rightmost column that are in the count for $m(z')$, so $m(z')=q+m(z)$. Therefore we have
		\begin{align*}
			\omega'\varphi_2 u|_{\sigma,\tau}&=(-1)^{\varsigma(q)}\sum_{z'}(-1)^{m(z')}\varphi_2 u|_{z'}=(-1)^{\varsigma(q)}\sum_{z}(-1)^{m(z)+q} u|_z\\
			&=(-1)^q(-1)^{\varsigma(q)}\sum_{z}(-1)^{m(z)}u|_{z}=(-1)^q \omega u_{\sigma',\tau}=\varphi_1\omega u|_{\sigma,\tau}.\qedhere
		\end{align*}
	\end{proof}
	
	We can then replace the occurrences of $N^\bullet$ and $M^\bullet$ in Figure \ref{CommutativeDiagram1} with $\mathcal{S}^{\bullet-1}(\mathbf{E}_{\mathbf{B}\not\geq x};F_{\mathbf{B}\not\geq x})$ and $T^{\bullet-1}_{\mathbf{B}\not\geq x}$, respectively, adjusting the incoming and outgoing maps as the appropriate compositions with $\varphi^\bullet_1$ and $\varphi^\bullet_2$. In the resulting commutative diagram, the two columns are exact since, by Propositions \ref{first} and \ref{second}, the maps $\varphi^\bullet_1$ and $\varphi^\bullet_2$ are isomorphisms. The squares commute by the commutativity of the diagram from the morphism of long exact sequences and the claim.
	
	We finish the proof by induction on the size of $\mathbf{B}$. If $|\Obj\mathbf{B}|=1$, then 
	\[
	{T}^n_\xi=\mathcal{S}^0(\mathbf{B};\mathcal{S}^{n})=\prod_{x\in \mathbf{B}} \mathcal{S}^{n}(\mathbf{E}_x;F_x)=\mathcal{S}^{n}(\mathbf{E}_\xi;F_\xi),
	\]
	and $\omega=(-1)^{\varsigma(q)}\id$, so $\omega$ is a quasi-isomorphism.

	If $\omega:\mathcal{S}^{n}(\mathbf{E}_\xi;F_\xi)\to {T}^n_\xi$ is a quasi-isomorphism for $|\Obj\mathbf{B}|<i$, then we can form the commutative diagram in Figure \ref{CommutativeDiagram1} for $|\Obj \mathbf{B}|=i$, with $N^\bullet$ and $M^\bullet$ replaced as discussed above. Each row other than the middle one contains an instance of the inductive hypothesis, since both $\mathbf{B}_{\not\geq x}$ and $\mathbf{B}_{\geq x}$ have fewer objects than $\mathbf{B}$; and $\mathbf{B}$ is recursively admissible. Therefore, by the Five Lemma, the middle row is an isomorphism and thus $\omega$ is a quasi-isomorphism. This completes the induction and the proof of the theorem.
\end{proof}

\section{A reduction property for sheaf cohomology}\label{sec:application}

The statement of \ref{MainTheorem} closely resembles that of \cite[Theorem 5.1]{ET12}. Despite this, the reframing of the result in terms of sheaf cohomology, as opposed to coloured poset homology, leads to applications that are quite different from those of the coloured poset version. The key difference, explored in this section, is that while the theorem in \cite{ET12} models complex interactions between the homologies of the fibers of a bundle of coloured posets (seen in the application to Khovanov homology), the main theorem of this paper implies that if $\xi:\mathbf{B}\to\mathbf{Sh}$ is a poset bundle of sheaves with $\mathbf{B}$ recursively admissible, then it is only the cohomology of the sheaf at the maximum of $\mathbf{B}$ that contributes to the cohomology of the total sheaf of $\xi$.

By the end of this chapter, we will be able to conclude that, for example, the cohomology of a sheaf on the poset in Figure \ref{TBAPoset} can only be non-zero in degrees $0$ and $1$.

\begin{figure}[h]
	\[\begin{tikzpicture}[scale=.8]
		\node (1) at (0,5) {$\bullet$};
		\node (2) at (2,5) {$\bullet$};
		\node (3) at (0,4) {$\bullet$};
		\node (4) at (1,4) {$\bullet$};
		\node (5) at (2,4) {$\bullet$};
		\node (6) at (3,4) {$\bullet$};
		\node (7) at (4,4) {$\bullet$};
		\node (8) at (1,3) {$\bullet$};
		\node (9) at (3,3) {$\bullet$};
		\node (10) at (4,3) {$\bullet$};
		\node (11) at (0,2) {$\bullet$};
		\node (12) at (1,2) {$\bullet$};
		\node (13) at (3,2) {$\bullet$};
		\node (14) at (4,2) {$\bullet$};
		\node (15) at (0,1) {$\bullet$};
		\node (16) at (2,1) {$\bullet$};
		\node (17) at (4,1) {$\bullet$};
		\node (18) at (2,0) {$\bullet$};

		\draw[-]		(1.center)  edge (3.center) 
		(1.center)  edge (4.center) 
		(2.center)  edge (4.center) 
		(2.center)  edge (5.center) 
		(2.center)  edge (6.center) 
		(2.center)  edge (7.center) 
		(3.center)  edge (11.center) 
		(3.center)  edge (12.center) 
		(3.center)  edge (13.center) 
		(4.center)  edge (8.center) 
		(5.center)  edge (8.center) 
		(5.center)  edge (9.center) 
		(6.center)  edge (9.center) 
		(6.center)  edge (10.center) 
		(7.center)  edge (12.center) 
		(7.center)  edge (10.center) 
		(8.center)  edge (11.center) 
		(9.center)  edge (14.center) 
		(10.center)  edge (13.center) 
		(10.center)  edge (14.center) 
		(11.center)  edge (15.center) 
		(11.center)  edge (16.center) 
		(12.center)  edge (16.center) 
		(13.center)  edge (16.center) 
		(14.center)  edge (16.center) 
		(14.center)  edge (17.center) 
		(15.center)  edge (18.center) 
		(16.center)  edge (18.center) 
		(17.center)  edge (18.center) ;
		
	\end{tikzpicture}\]
	\caption{The cohomology of any sheaf on this poset is zero in all degrees $\not=0,1$. Convention is that arrows go up.}
	\label{TBAPoset}
\end{figure}

It turns out that the restriction to recursively admissible posets means that we only deal with posets with $1$. 

\begin{proposition}\label{RecursivesHaveMaxes}
	Let $\mathbf{B}$ be a recursively admissible poset. Then $\mathbf{B}$ has a unique maximum.
\end{proposition}
	This follows from the recursive definition (Definition \ref{admissible}): the poset $\mathbf{B}$ is either Boolean of rank $1$, so it has a unique maximum, or all its maximums are contained in $\mathbf{B}_{\geq x}$ for some $x\succ 0$, since $\mathbf{B}_{\geq x}^{\geq y}\not=\emptyset$ for all $y\in\mathbf{B}_{\not\geq x}$.

The admissibility property provides a kind of `factorisation' for posets into bundles. The simplest way to do this is to turn an admissible poset into a bundle over a Boolean lattice of rank $1$ $\mathbb{B}_1$. Note that Boolean lattices are recursively admissible, so we can later apply Theorem \ref{MainTheorem}.

\begin{lemma}
	Let $\mathbf{E}$ be an admissible poset for $\mathbf{E}',\mathbf{E}''$ and $(\mathbf{E},F)\in\mathbf{Sh}$. Then there is a poset bundle of sheaves $\xi:\mathbb{B}_1\to\mathbf{Sh}$ such that $(\mathbf{E}_\xi,F_\xi)=(\mathbf{E},F)$ (recall the construction of the total sheaf $(\mathbf{E}_\xi,F_\xi)$, Definition \ref{TotalSheaf}).
\end{lemma}
\begin{proof}
	We need to specify $\xi(0)$, $\xi(1)$, and $\xi(0\leq 1)$.
	\begin{itemize}[nosep]
		\item $\xi(0)=(\mathbf{E}',F)$,
		\item $\xi(1)=(\mathbf{E}'',F)$,
		\item the sheaf morphism $\gamma=\xi(0\leq 1)$ consists of a covariant functor (a poset map in this setting) $\gamma_1:\mathbf{E}'\to\mathbf{E}''$ and a natural transformation $\gamma_2:F\gamma_1\to F$:
		\begin{itemize}
			\item Let $\gamma_1(x)$ be the unique minimum of $\{y\in\mathbf{E}''\mid x\leq y\}$. Then if $x\leq x'$ in $\mathbf{E}'$, we have $\{y\in \mathbf{E}''\mid x\leq y\}\supseteq\{y\in\mathbf{E}''\mid x'\leq y\}$ and so $\gamma_1(x)\leq\gamma_1(x')$.
			\item Since $x\leq\gamma_1(x)$, we have a morphism $F(x)\leftarrow F(\gamma_1(x))$ from $(\mathbf{E},F)$. Set $\gamma_{2,x}$ to be this morphism.
		\end{itemize}
	\end{itemize}

	Remains to show that $(\mathbf{E},F)=(\mathbf{E}_\xi,F_\xi)$. It is enough to show that $\mathbf{E}=\mathbf{E}_\xi$ by the construction of $F_\xi$.
	If $x\leq y$ in $\mathbf{E}$ and either $x,y\in\mathbf{E}'$ or $x,y\in\mathbf{E}''$, then clearly $x\leq y$ in $\mathbf{E}_\xi$ (as an arrow of type a)). Suppose $x\leq y$ in $\mathbf{E}$ and $x\in\mathbf{E}'$, $y\in\mathbf{E}''$. Then $x\leq\gamma_1(x)\leq y$, so $x\leq y$ in $\mathbf{E}_\xi$.
	Conversely, the set of arrows in $\mathbf{E}_\xi$ is generated by inequalities that hold in $\mathbf{E}$. Therefore, $x\leq y$ in $\mathbf{E}$ if and only if $x\leq y$ in $\mathbf{E}_\xi$.
\end{proof}

We can also `factorise' a poset into a bundle over a more complicated base.

\begin{proposition}
	Let $\mathbf{E}$ and $\mathbf{B}$ be posets, let $(\mathbf{E},F)\in\mathbf{Sh}$, and let $\pi:\mathbf{E}\to\mathbf{B}$ be an onto poset map, such that for all $x<y$ in $\mathbf{B}$, the subposet $\pi^{-1}(x)\cup\pi^{-1}(y)$ of $\mathbf{E}$ is admissible for $\pi^{-1}(x),\pi^{-1}(y)$. Then there is a poset bundle of sheaves $\xi:\mathbf{B}\to\mathbf{Sh}$ such that $(\mathbf{E},F)=(\mathbf{E}_\xi,F_\xi)$.	
\end{proposition}

\begin{proof}
	Following the approach from the previous proposition, set $\xi(x)=(\pi^{-1}(x),F)$ and if $x<y$ in $\mathbf{B}$, then $\xi_1(x<y)$ sends $z\in\pi^{-1}(x)$ to the minimum of the subposet $\{w\in\pi^{-1}(y)\mid z\leq w\}$.
	
	Now suppose $z<w$ in $\mathbf{E}$ and $z\in\pi^{-1}(x)$, $w\in\pi^{-1}(y)$. Since $\pi$ is a poset map, $x<y$ in $\mathbf{B}$ and $z<\xi_1(x<y)(z)\leq w$ in $\mathbf{E}_\xi$.
	
	If $z<w$ in $\mathbf{E}_\xi$ is an arrow of type b) or a composition arrow, then by Proposition \ref{Composition arrows} there is a $v\in\pi^{-1}(\pi(w))$, such that $z<v<w$ in $\mathbf{E}_\xi$, where $z<v$ and $v<w$ are arrows of type b) and a), respectively. But both those arrows exist in $\mathbf{E}$, so $z<w$ in $\mathbf{E}$. 
\end{proof}

The following is a consequence of recursively admissible posets' having a unique maximum (or final object).

\begin{proposition}
	Let $\mathbf{B}$ be a recursively admissible poset and let $\xi:\mathbf{B}\to\mathbf{Sh}$ be a poset bundle of sheaves. If $1\in\mathbf{B}$ is the unique maximal object, then 
	\[
	H^\bullet(\mathbf{E}_\xi,F_\xi)\cong H^\bullet(\xi(1)).
	\]
\end{proposition}
\begin{proof}
	Let $E$ be the spectral sequence associated with $\xi$. We know that $E_2^{p,q}=H^p(\mathbf{B};\mathcal{H}_{fib}^q)$. Now, $\mathbf{B}$ has a unique maximum $1$ (Proposition \ref{RecursivesHaveMaxes}), so the functors $\varprojlim_\mathbf{B}$ and the `evaluation at $1$' functor $\_(1):\mathbf{Sh}(\mathbf{B})\to\prescript{}{R}{Mod}$ are naturally isomorphic. But we know that evaluation functors are exact. Therefore
	\[
	H^p(\mathbf{B};\mathcal{H}_{fib}^q)=\left\{\begin{array}{c c l}H^q(\xi(1)), && \text{ if }p=0, \\ 0, && \text{ otherwise}.\end{array}\right.
	\]
	
	Thus the spectral sequence collapses, we get $H^n(T^\bullet_\xi)\cong H^n(\xi(1))$, and since $\mathbf{B}$ is recursively admissible, Theorem \ref{MainTheorem} applies. This means we have $H^\bullet(\xi(1))\cong H^\bullet(T^\bullet_\xi)\cong H^\bullet(\mathbf{E}_\xi,F_\xi)$.
\end{proof}

We can now package the discussion into our main application.

\begin{theorem}\label{MainApplication}
	Let $\mathbf{E}$ and $\mathbf{B}$ be posets, with $\mathbf{B}$ recursively admissible. Suppose that \hbox{$\pi:\mathbf{E}\to\mathbf{B}$} is an onto poset map such that for all $x<y$ in $\mathbf{B}$, the subposet \hbox{$\pi^{-1}(x)\cup\pi^{-1}(y)$} of $\mathbf{E}$ is admissible for $\pi^{-1}(x),\pi^{-1}(y)$. Then
	\[
	H^\bullet(\mathbf{E};F)\cong H^\bullet(\pi^{-1}(1);F)
	\]
	for all $F\in\mathbf{Sh}(\mathbf{E})$, where $1$ is the unique maximum of $\mathbf{B}$.
\end{theorem}

\begin{remark}
	The above recipe can be applied repeatedly. Indeed, one can imagine cases where a poset $\mathbf{E}$ is admissible for $\mathbf{E}_1,\mathbf{E}_2$, and $\mathbf{E}_2$ is admissible for $\mathbf{E}_3,\mathbf{E}_4$, but $\mathbf{E}_1$ is not admissible, so the poset map $\pi:\mathbf{E}\to\mathbb{B}_2$ required for the above theorem does not exist. Despite this, we can apply the theorem twice with $\mathbf{B}=\mathbb{B}_1$ and deduce that $H^\bullet(\mathbf{E};F)\cong H^\bullet(\mathbf{E}_4;F)$, for any $F\in\mathbf{Sh}(\mathbf{E})$.
	
	Conversely, if the required poset map $\pi:\mathbf{E}\to\mathbf{B}$ exists for some recursively admissible $\mathbf{B}$, we can instead repeatedly apply Theorem \ref{MainApplication} for $\mathbb{B}_1$, at each step applying the recursive definition. The upshot is that replacing the recursively admissible $\mathbf{B}$ with the concrete $\mathbb{B}_1$ in the above theorem results in an equivalent statement.
\end{remark}

\begin{example}
	We can now examine the explicit poset given at the start of the chapter (with arrowheads omitted, but always pointing up). Let $\mathbf{E}$ be the poset in Figure \ref{TBAPoset} and choose an $F\in\mathbf{Sh}(\mathbf{E})$. First, $\mathbf{E}$ is admissible for $\mathbf{E}_1,\mathbf{E}_2$ by inspection of the left-hand side diagram in Figure \ref{BigE}. The right-hand side shows a reduction with $\mathbf{B}=\mathbb{B}_2$.
	\begin{figure}[h]\[\begin{tikzpicture}
		\node (1) at (0,5) {$\bullet$};
		\node (2) at (2,5) {$\bullet$};
		\node (3) at (0,4) {$\bullet$};
		\node (4) at (1,4) {$\bullet$};
		\node (5) at (2,4) {$\bullet$};
		\node (6) at (3,4) {$\bullet$};
		\node (7) at (4,4) {$\bullet$};
		\node (8) at (1,3) {$\bullet$};
		\node (9) at (3,3) {$\bullet$};
		\node (10) at (4,3) {$\bullet$};
		\node (11) at (0,2) {$\bullet$};
		\node (12) at (1,2) {$\bullet$};
		\node (13) at (3,2) {$\bullet$};
		\node (14) at (4,2) {$\bullet$};
		\node (15) at (0,1) {$\bullet$};
		\node (16) at (2,1) {$\bullet$};
		\node (17) at (4,1) {$\bullet$};
		\node (18) at (2,0) {$\bullet$}; 
		\node(19) at (.3,-.4){$\color{blue}\mathbf{E}_1$};
		\node(20) at (3.8,5.2){$\color{red}\mathbf{E}_2$};

		\draw[-]		(1.center)  edge (3.center) 
		(1.center)  edge (4.center) 
		(2.center)  edge (4.center) 
		(2.center)  edge (5.center) 
		(2.center)  edge (6.center) 
		(2.center)  edge (7.center) 
		(3.center)  edge (11.center) 
		(3.center)  edge (12.center) 
		(3.center)  edge (13.center) 
		(4.center)  edge (8.center) 
		(5.center)  edge (8.center) 
		(5.center)  edge (9.center) 
		(6.center)  edge (9.center) 
		(6.center)  edge (10.center) 
		(7.center)  edge (12.center) 
		(7.center)  edge (10.center) 
		(8.center)  edge (11.center) 
		(9.center)  edge (14.center) 
		(10.center)  edge (13.center) 
		(10.center)  edge (14.center) 
		(11.center)  edge (15.center) 
		(11.center)  edge (16.center) 
		(12.center)  edge (16.center) 
		(13.center)  edge (16.center) 
		(14.center)  edge (16.center) 
		(14.center)  edge (17.center) 
		(15.center)  edge (18.center) 
		(16.center)  edge (18.center) 
		(17.center)  edge (18.center);
		
		\draw [color=blue,fill=blue!30,fill opacity =.1] plot [smooth cycle, tension=1.5] coordinates {(0,.5) (0,1.4) (2,.4) (4,1.4) (4,.5) (2,-.5)};
		\draw [color=red,fill=red!30,fill opacity =.1] plot [smooth cycle, tension=1.5] coordinates {(-.3,2.3) (.2,1.55) (2,.6) (3.8, 1.55) (4.3,2.3) (4,4.5) (0,5.5)};
		
		\node (91) at (7+0,-1+5) {$\bullet$};
		\node (92) at (7+2,-1+5) {$\bullet$};
		\node (93) at (7+0,-1+4) {$\bullet$};
		\node (94) at (7+1,-1+4) {$\bullet$};
		\node (95) at (7+2,-1+4) {$\bullet$};
		\node (96) at (7+3,-1+4) {$\bullet$};
		\node (97) at (7+4,-1+4) {$\bullet$};
		\node (98) at (7+1,-1+3) {$\bullet$};
		\node (99) at (7+3,-1+3) {$\bullet$};
		\node (910) at (7+4,-1+3) {$\bullet$};
		\node (911) at (7+0,-1+2) {$\bullet$};
		\node (912) at (7+1,-1+2) {$\bullet$};
		\node (913) at (7+3,-1+2) {$\bullet$};
		\node (914) at (7+4,-1+2) {$\bullet$};
		\node (916) at (7+2,-1+1) {$\bullet$};
		\node(919) at (7+1.1,-1+1){$\color{blue}\mathbf{E}_3$};
		\node(920) at (7+3,-1+5.4){$\color{red}\mathbf{E}_6$};
		\node(921) at (7+4.7,-1+3) {$\color{green}\mathbf{E}_5$};
		\node(922) at (7-.7,-1+3) {$\color{orange}\mathbf{E}_4$};
		
		\draw[-]		(91.center)  edge (93.center) 
		(91.center)  edge (94.center) 
		(92.center)  edge (94.center) 
		(92.center)  edge (95.center) 
		(92.center)  edge (96.center) 
		(92.center)  edge (97.center) 
		(93.center)  edge (911.center) 
		(93.center)  edge (912.center) 
		(93.center)  edge (913.center) 
		(94.center)  edge (98.center) 
		(95.center)  edge (98.center) 
		(95.center)  edge (99.center) 
		(96.center)  edge (99.center) 
		(96.center)  edge (910.center) 
		(97.center)  edge (912.center) 
		(97.center)  edge (910.center) 
		(98.center)  edge (911.center) 
		(99.center)  edge (914.center) 
		(910.center)  edge (913.center) 
		(910.center)  edge (914.center) 
		(911.center)  edge (916.center) 
		(912.center)  edge (916.center) 
		(913.center)  edge (916.center) 
		(914.center)  edge (916.center) ;

		\draw [color=blue,fill=blue!30,fill opacity =.1] plot [smooth cycle, tension=1.5] coordinates {(7+2,-1+.8) (7+3.2,-1+2.1) (7+.8,-1+2.1)};
		\draw [color=green,fill=green!30,fill opacity =.1] plot [smooth cycle, tension=1.5] coordinates {(7+3.7,-1+1.9) (7+4.3,-1+1.9) (7+4.3,-1+4.1) (7+3.7,-1+4.1)};
		\draw [color=orange,fill=orange!30,fill opacity =.1] plot [smooth cycle, tension=1.5] coordinates {(7-.3,-1+1.9) (7+.3,-1+1.9) (7+.3,-1+4.1) (7-.3,-1+4.1)};
		\draw [color=red,fill=red!30,fill opacity =.1] plot [smooth cycle, tension=1.5] coordinates {(7-.2,-1+4.8) (7+.6,-1+4) (7+.6,-1+3) (7+1,-1+2.8) (7+2,-1+3.3) (7+3,-1+2.8) (7+3.4,-1+3) (7+3,-1+5) (7+0,-1+5.2)};
		
		\draw[->,tips=proper](4.7,2.5) edge (6.3,2.5);
	\end{tikzpicture}\]
	\caption{The first two reductions of the poset $\mathbf{E}$.}
	\label{BigE}
	\end{figure}
	
	Another two applications of Theorem \ref{MainApplication} with $\mathbf{B}=\mathbb{B}_1$ reduce the poset even further (Figure \ref{SmallE}).
 \begin{figure}[h]
\centering
\begin{minipage}{0.6\textwidth}
\[	\begin{tikzpicture}[scale=.8]
		\node (1) at (0,5) {$\bullet$};
		\node (2) at (2,5) {$\bullet$};
		\node (4) at (1,4) {$\bullet$};
		\node (5) at (2,4) {$\bullet$};
		\node (6) at (3,4) {$\bullet$};
		\node (8) at (1,3) {$\bullet$};
		\node (9) at (3,3) {$\bullet$};
  
		\draw[-]
		(1.center)  edge (4.center) 
		(2.center)  edge (4.center) 
		(2.center)  edge (5.center) 
		(2.center)  edge (6.center) 
		(4.center)  edge (8.center) 
		(5.center)  edge (8.center) 
		(5.center)  edge (9.center) 
		(6.center)  edge (9.center);

		\draw [color=blue,fill=blue!30,fill opacity =.1] plot [smooth cycle, tension=1.5] coordinates {(1,2.7) (1,3.4) (2,4.3) (3,3.4) (3,2.7)};
		\draw [color=red,fill=red!30,fill opacity =.1] plot [smooth cycle, tension=1.5] coordinates {(1,3.7) (2,4.7) (3.2,3.4) (3.3,3.9) (2,5.3) (-.2,5.1)};

		\node(11) at (5,4.5) {$\bullet$};
		\node(12) at (5.7,3.5) {$\bullet$};
		\node (13) at (6.4,4.5) {$\bullet$};
		\node(14) at (7.1,3.5) {$\bullet$};
		
		\draw[-]
		(11.center) edge (12.center)
		(12.center) edge (13.center)
		(13.center) edge (14.center);
		
		\draw[->,tips=proper] (7.7,4) edge (8.7,4)
		(3.6,4) edge (4.6,4);
		
		\draw[color=blue,fill=blue!30,fill opacity =.1] plot [smooth cycle, tension=1.5] coordinates {(7.3,3.7) (7.3,3.3) (6.9,3.3) (6.9,3.7)};
		\draw[color=red,fill=red!30,fill opacity =.1] plot [smooth cycle, tension=1.5] coordinates {(4.8,4.6) (5.7,3.2) (6.6,4.6)};

		\node(15) at (9.1,4.5) {$\bullet$};
		\node(16) at (9.8,3.5) {$\bullet$};
		\node(17) at (10.5,4.5) {$\bullet$};
		\draw[-]
		(15.center) edge (16.center)
		(16.center) edge (17.center);
		
		\node(18) at (10.5,3.5) {$\mathbf{E}_7$};
		\end{tikzpicture}\]
		\caption{Further reduction of the poset $\mathbf{E}$.}
		\label{SmallE}
  \end{minipage}\hfill
  \begin{minipage}{.4\textwidth}
  \[	\begin{tikzpicture}[scale=.7]
		
		\node(1) at (0,0) {$\bullet$};
		\node(2) at (.3,0) {$x$};
		\draw (0,1.6) ellipse (1 and .6);
		\draw (0,-1.6) ellipse (1 and .6);
		\draw[->,tips=proper] (-.6,-1.4) edge (1)
		(0,-1.2) edge (1)
		(.6,-1.4) edge (1)
		(1) edge (-.6,1.4)
		(1) edge (0,1.2)
		(1) edge (.6,1.4);
		\draw[color=blue,fill=blue!30,fill opacity =.1] (0,-1.6) ellipse (1.25 and .85);
		\draw[color=red,fill=red!30,fill opacity =.1] (0,1) ellipse (1.3 and 1.6);
		\node(3) at (1.9,-1.6) {$\color{blue}\mathbf{E}_{<x}$};
		\node(4) at (1.9,1) {$\color{red}\mathbf{E}_{\geq x}$};
		
	\end{tikzpicture}\]
	\caption{A decomposition of a poset with a total point.}
	\label{TotalPoint}
 \end{minipage}
	\end{figure}
		
	We thus have that $H^\bullet(\mathbf{E};F)\cong H^\bullet(\mathbf{E}_7;F)$. To see that the cohomology of $(\mathbf{E}_7,F)$ is zero for all degrees $\geq 2$, we can use the chain complex $\mathcal{T}^\bullet(\mathbf{E}_7;F):=\mathcal{S}^\bullet(\mathbf{E}_7;F)/D^\bullet$, where $D^\bullet$ is the subcomplex consisting of the degenerate simplices in $\mathbf{E}_7$, i.e.~the simplices that involve an identity arrow. This new chain complex $\mathcal{T}^\bullet$ is homotopy equivalent to $\mathcal{S}^\bullet$ (see \cite[p.138]{ET15}) and since it only involves non-degenerate simplices, its cohomology is clearly trivial at degrees $\geq 2$. 
\end{example}

There is also a more general example that we can apply our theorem to.
\begin{proposition}
	Let $\mathbf{E}$ be a poset and let $x\in\mathbf{E}$ be a total point, i.e.~for all $y\in\mathbf{E}$, either $x\leq y$ or $y\leq x$. Then $H^\bullet(\mathbf{E};F)\cong H^\bullet(\mathbf{E}_{\geq x};F)$ for any $F\in\mathbf{Sh}(\mathbf{E})$.
\end{proposition}
\begin{proof}
	If $\mathbf{E}_{<x}=\emptyset$, then $\mathbf{E}=\mathbf{E}_{\geq x}$ and the statement of the proposition is trivial. Otherwise, consider the subposets $\mathbf{E}_{\geq x}$ and $\mathbf{E}_{< x}$ (see Figure \ref{TotalPoint}). For any $y\in\mathbf{E}_{<x}$, we have $\min\mathbf{E}_{\geq x}^{\geq y}=x$ and so $\mathbf{E}$ is admissible for $\mathbf{E}_{<x},\mathbf{E}_{\geq x}$. Applying Theorem \ref{MainApplication} gives the required result.
\end{proof}

\section*{References}

\begin{biblist} 

\bib{ET15}{article}{
   author={Everitt, Brent},
   author={Turner, Paul},
   title={Cellular cohomology of posets with local coefficients},
   journal={J. Algebra},
   volume={439},
   date={2015},
   pages={134--158},
   issn={0021-8693},
   review={\MR{3373367}},
}

\bib{ET12}{article}{
   author={Everitt, Brent},
   author={Turner, Paul},
   title={Bundles of coloured posets and a Leray-Serre spectral sequence for
   Khovanov homology},
   journal={Trans. Amer. Math. Soc.},
   volume={364},
   date={2012},
   number={6},
   pages={3137--3158},
   issn={0002-9947},
   review={\MR{2888240}},
}

\bib{ET09}{article}{
   author={Everitt, Brent},
   author={Turner, Paul},
   title={Homology of coloured posets: a generalisation of Khovanov's cube
   construction},
   journal={J. Algebra},
   volume={322},
   date={2009},
   number={2},
   pages={429--448},
   issn={0021-8693},
   review={\MR{2529096}},
}

\bib{maclane}{book}{
    author={MacLane, Saunders},
    title={Homology},
    publisher={Springer-Verlag Berlin Heidelberg},
    date={1995},
    edition={1},
    isbn={978-3-540-58662-3},
}

 \bib{weibel}{book}{
   author={Weibel, Charles A.},
   title={An introduction to homological algebra},
   series={Cambridge Studies in Advanced Mathematics},
   volume={38},
   publisher={Cambridge University Press, Cambridge},
   date={1994},
   pages={xiv+450},
   isbn={0-521-43500-5},
   isbn={0-521-55987-1},
   review={\MR{1269324}},
 }

\bib{khovanov}{article}{
   author={Khovanov, Mikhail},
   title={A categorification of the Jones polynomial},
   journal={Duke  Math.  J.},
   volume={101},
   date={2000},
   number={3},
   pages={359--426},
   review={\MR{1740682}},
}

\end{biblist}

\end{document}